\DeclareMathOperator{\subjectto}{subject~to}
\newcommand{\minimize}{\mathop{\rm minimize}\limits}
\newcommand{\maximize}{\mathop{\rm maximize}\limits}
\journalname{Springer Journal}
\begin{document}

\title{Bilevel Cutting-plane Algorithm for Solving Cardinality-constrained Mean-CVaR Portfolio Optimization Problems%\thanks{Grants or other notes
%about the article that should go on the front page should be
%placed here. General acknowledgments should be placed at the end of the article.}
}
%\subtitle{Do you have a subtitle?\\ If so, write it here}

\titlerunning{Cardinality-constrained Mean-CVaR Portfolio Optimization}        % if too long for running head

\author{Ken Kobayashi \and  Yuichi Takano \and Kazuhide Nakata%etc.
}

%\authorrunning{Short form of author list} % if too long for running head
\institute{Ken Kobayashi (corresponding author) \at
              Artificial Intelligence Laboratory, Fujitsu Laboratories Ltd., 4-1-1 Kamikodanaka, Nakahara-ku, Kawasaki-shi, Kanagawa 211-8588, Japan \\
              Tel.: +81-44-754-2328\\
              Fax: +81-44-754-2664\\
              \email{ken-kobayashi@fujitsu.com}           %  \\
%             \emph{Present address:} of F. Author  %  if needed
           \and
           Yuichi Takano \at
              Faculty of Engineering, Information and Systems, University of Tsukuba, 1-1-1 Tennodai, Tsukuba-shi, Ibaraki 305-8573, Japan
           \and  
           Kazuhide Nakata \at
              School of Engineering, Tokyo Institute of Technology, 2-12-1 Ookayama, Meguro-ku, Tokyo 152-8552, Japan
}

\date{Received: date / Accepted: date}
% The correct dates will be entered by the editor

\maketitle

\begin{abstract}
This paper studies mean-risk portfolio optimization models using the conditional value-at-risk (CVaR) as a risk measure. 
We also employ a cardinality constraint for limiting the number of invested assets.
Solving such a cardinality-constrained mean-CVaR model is computationally challenging for two main reasons. 
First, this model is formulated as a mixed-integer optimization (MIO) problem because of the cardinality constraint, so solving it exactly is very hard when the number of investable assets is large. 
Second, the problem size depends on the number of asset return scenarios, and the computational efficiency decreases when the number of scenarios is large. 
To overcome these challenges, we propose a high-performance algorithm named the \emph{bilevel cutting-plane algorithm} for exactly solving the cardinality-constrained mean-CVaR portfolio optimization problem.
We begin by reformulating the problem as a bilevel optimization problem and then develop a cutting-plane algorithm for solving the upper-level problem.
To speed up computations for cut generation, we apply to the lower-level problem another cutting-plane algorithm for efficiently minimizing CVaR with a large number of scenarios. 
Moreover, we prove the convergence properties of our bilevel cutting-plane algorithm. 
Numerical experiments demonstrate that, compared with other MIO approaches, our algorithm can provide optimal solutions to large problem instances faster.
\keywords{%% keywords here, in the form: keyword \sep keyword
Mixed-integer optimization \and Portfolio optimization \and Cardinality constraint \and Conditional value-at-risk \and Cutting-plane algorithm
}
% \PACS{PACS code1 \and PACS code2 \and more}
% \subclass{MSC code1 \and MSC code2 \and more}
\end{abstract}

\section{Introduction}\label{sec:intro}
Since the introduction of mean-variance portfolio analysis~\cite{Markowitz1952}, portfolio optimization models have been widely used in financial industries and actively studied by both academic researchers and institutional investors.
The traditional framework established by Markowitz~\cite{Markowitz1952} determines investment weights of financial assets with the aim of making low-risk high-return investments.
This paper focuses on mean-risk portfolio optimization models using the conditional value-at-risk (CVaR)~\cite{Rockafellar2000,Rockafellar2002} as a risk measure.

CVaR is a downside risk measure for evaluating a potential heavy loss. It is known as a coherent risk measure that has the desirable properties of monotonicity, translation invariance, positive homogeneity, and subadditivity \cite{Artzner1999,Pflug2000}. 
Additionally, CVaR is monotonic with respect to second-order stochastic dominance \cite{Pflug2000}, which means that CVaR minimization is consistent with the preference of any rational risk-averse decision-maker. 
In the standard scenario-based formulation~\cite{Rockafellar2000,Rockafellar2002}, many scenarios are required to approximate CVaR accurately~\cite{Kaut2007,Takeda2009}. 
To resolve this computational difficulty, various efficient algorithms have been proposed, including the dual solution method~\cite{Ogryczak2011}, nonsmooth optimization algorithms~\cite{Beliakov2006,Iyengar2013,Lim2010,Rockafellar2000}, the factor model~\cite{Konno2002}, cutting-plane algorithms~\cite{Ahmed2006,Haneveld2006,Kunzi-Bay2006,Takano2014}, the level method~\cite{Fabian2008}, smoothing methods~\cite{Alexander2006,Tong2010}, and successive regression approximations~\cite{Agoston2012}.  

We consider solving mean-CVaR portfolio optimization problems with a cardinality constraint for limiting the number of invested assets.
This constraint comes from real-world practice; if the number of invested assets is large, it is difficult for investors to monitor each asset, and substantial transaction costs are required~\cite{Mansini2014,Wilding2003}. 
Solving such a cardinality-constrained mean-CVaR model is computationally challenging for two main reasons. 
First, this model is formulated as a mixed-integer optimization (MIO) problem because of the cardinality constraint, so solving it exactly is very hard when the number of investable assets is large. 
Second, the problem size depends on the number of asset return scenarios. This decreases computational efficiency because sufficiently many scenarios are required to approximate CVaR accurately~\cite{Kaut2007,Takeda2009}.  
For this problem, some heuristic optimization algorithms have been developed, including continuous-relaxation-based heuristics~\cite{Angelelli2008}, the $\ell_1$-norm-based approximation of the cardinality constraint~\cite{Cheng2015}, and a Scholtes-type regularization method for complementarity constraints~\cite{Branda2018}. 
However, these algorithms cannot guarantee global optimality of obtained solutions. 
See Mansini \textit{et al.}~\cite{Mansini2014} for previous studies on cardinality-constrained portfolio optimization models. 

Recently, Bertsimas and Cory-Wright proposed a high-performance cutting-plane algorithm for exactly solving cardinality-constrained mean-variance portfolio optimization problems~\cite{Bertsimas2018}. 
They converted the mean-variance model into a bilevel optimization problem composed of an upper- and lower-level problem.  
This problem structure is fully exploited in their cutting-plane algorithm, where the upper-level problem is iteratively approximated using cutting planes derived from the dual formulation of the lower-level problem. 
Numerical experiments demonstrated that their cutting-plane algorithm was much faster than state-of-the-art MIO methods for solving large problem instances. 
More surprisingly, their algorithm succeeded in yielding optimal solutions to problem instances involving thousands of assets.

On the basis of this previous research, we propose a high-performance algorithm named the \textit{bilevel cutting-plane algorithm} for exactly solving the cardinality-constrained mean-CVaR portfolio optimization problem. 
For this purpose, we extend the cutting-plane algorithm~\cite{Bertsimas2018} that was originally developed for the mean-variance model. 
In the mean-CVaR model, however, the size of the lower-level problem depends on the number of scenarios, which degrades the computational performance of the algorithm. 
To speed up the computations of the lower-level problem, we apply the cutting-plane algorithm~\cite{Ahmed2006,Haneveld2006,Kunzi-Bay2006,Takano2014} that was developed for efficiently minimizing CVaR. 
As a result, these two types of cutting-plane algorithms are integrated into our bilevel cutting-plane algorithm to achieve faster computations.
We also prove its convergence properties. 
To our knowledge, we are the first to develop an effective algorithm for exactly solving cardinality-constrained mean-CVaR portfolio optimization problems.

We conducted numerical experiments using some benchmark datasets~\cite{Beasley1990,Kenneth} to evaluate the efficiency of our bilevel cutting-plane algorithm.
Numerical results demonstrate that our algorithm is faster than other MIO approaches, especially for large problem instances.
Remarkably, our algorithm attained an optimal solution within 3600~s to a problem involving 225 assets and 100,000 scenarios. 

The rest of the paper is organized as follows.
In Section~\ref{sec:formulation}, we formulate the cardinality-constrained mean-CVaR portfolio optimization problem.
In Section~\ref{sec:bcpa}, we describe our bilevel cutting-plane algorithm for solving the problem.
We report computational results in Section~\ref{sec:experiments} and conclude in Section~\ref{sec:conclusion}.

\section{Problem Formulation}\label{sec:formulation}
In this section, we formulate the cardinality-constrained mean-CVaR portfolio optimization problem that we consider in this paper. 

\subsection{Cardinality constraint}
Let $\bm x := (x_1, x_2,\ldots, x_N)^\top$ be a portfolio, where $x_n$ is the investment weight of the $n$th asset. 
Assume that
\begin{equation}\label{eq:weight}
\bm 1^\top \bm x = 1, \quad \bm x\geq \bm 0,
\end{equation}
where $\bm 1$ is the vector whose entries are all one.
The nonnegativity constraint on $\bm x$ prohibits short selling.

A cardinality constraint is employed to limit the number of assets held. 
Let $k$ be a user-defined parameter for limiting the cardinality. 
We then impose the following constraint on portfolio $\bm x$:
\begin{equation}\label{eq:cardinality}
  \|\bm x\|_0 \leq k,   
\end{equation}
where $\|\cdot\|_0$ is the $\ell_0$-norm (i.e., the number of nonzero entries).
In practice, investors require this constraint to reduce their portfolio monitoring and transaction costs.

Let $\bm z := (z_1,z_2,\ldots, z_N)^\top$ be a vector of binary decision variables for selecting assets; that is, $z_n = 1$ if the $n$th asset is selected, and $z_n=0$ otherwise. 
We also introduce the feasible set corresponding to the cardinality constraint~\eqref{eq:cardinality}: 
\begin{equation*}
\mathcal{Z}_N^k := \left\{\bm z\in \{0,1\}^N  ~\middle|~ \sum_{n\in \mathcal{N}} z_n\leq k\right\},
\end{equation*}
where $\mathcal{N}:= \{1,2,\ldots, N\}$ is the index set of $N$ assets.

Then, the cardinality constraint \eqref{eq:cardinality} is represented by the following logical implication:
\begin{numcases}
    {}
    z_n  = 0 ~\Rightarrow~ x_n = 0 &($\forall n\in \mathcal{N}$), \label{constr:logic}\\
    \bm z \in \mathcal{Z}_N^k.\label{constr:cardinality_z}
\end{numcases}

\subsection{Conditional value-at-risk}
Let $\tilde{\bm r} := (\tilde{r}_1, \tilde{r}_2,\ldots, \tilde{r}_N)^\top$ be a vector representing the rate of random return of each asset, and $\mathcal{P}: \mathbb{R}^N\rightarrow \mathbb{R}$ be the corresponding probability density function. 
The loss function is defined as the negative of the portfolio net return:
\begin{equation*}
    \mathcal{L}(\bm x, \tilde{\bm r}) := -\tilde{\bm r}^\top \bm x.
\end{equation*}

Let $\beta\in (0,1)$ be a probability level parameter, which is frequently set close to one.
Then, $\beta$-CVaR can be regarded as the approximate conditional expectation of a random loss exceeding the $\beta$-value-at-risk ($\beta$-VaR).
We use the following function to calculate CVaR: 
\begin{equation}\label{eq:CVaR_function}
    \mathcal{F}_{\beta}(a,\bm x) := a + \frac{1}{1-\beta}\int_{\bm r \in \mathbb{R}^N}[\mathcal{L}(\bm x, \bm r)-a]_+\mathcal{P}(\bm r)\,\textup{d} \bm r,
\end{equation}
where $a$ is a decision variable corresponding to $\beta$-VaR, and $[\xi]_+$ is a positive part of $\xi$ (i.e., $[\xi]_+ := \max\{0,\xi\}$).
Then, $\beta$-CVaR of the portfolio $\bm x$ is calculated as follows~\cite{Rockafellar2000,Rockafellar2002}:
\begin{equation*}
    \minimize_{a}~\mathcal{F}_{\beta}(a,\bm x).
\end{equation*}

Because multiple integration in Eq.~\eqref{eq:CVaR_function} is computationally expensive, the scenario-based approximation is commonly used. 
Let $\mathcal{S}:=\{1,2,\ldots,S\}$ be the index set of $S$ scenarios. 
We then have 
\begin{align*}
    \mathcal{F}_{\beta}(a,\bm x) &\approx a + \frac{1}{1-\beta}\sum_{s\in \mathcal{S}}p_s\left[-(\bm r^{(s)})^\top \bm x-a\right]_+,
\end{align*}
where $\bm r^{(s)} := (r^{(s)}_1,r^{(s)}_2,\ldots,r_N^{(s)})^\top$ is the $s$th scenario of asset returns generated from the probability density function $\mathcal{P}$, and $\bm p := (p_1,p_2,\ldots,p_S)^\top$ is a vector composed of occurrence probabilities of scenarios $s \in \mathcal{S}$.

\subsection{Portfolio optimization model}
Throughout the paper, we consider the following set of feasible portfolios:
\begin{equation*}%\label{eq:setX}
    \mathcal{X} := \{\bm x \in \mathbb{R}^N\mid \bm A\bm x\leq \bm b,~\bm 1^\top \bm x = 1,~\bm x\geq \bm 0\},
\end{equation*}
where $\bm A \in \mathbb{R}^{M\times N}$ and $\bm b\in \mathbb{R}^M$ are given.
It is supposed that the linear constraint $\bm A\bm x\leq \bm b$ contains the expected return constraint: 
\begin{equation}\label{eq:con_exp_return}
    \bm \mu^\top \bm x \geq \bar{\mu},
\end{equation}
where $\bm \mu :=(\mu_1,\mu_2,\ldots,\mu_N)^{\top}$ is a vector composed of expected returns of assets, and $\bar{\mu}$ is a parameter of the required return level.

In line with previous studies~\cite{Bertsimas2018,DeMiguel2009,Gotoh2013,Gotoh2011}, we incorporate the $\ell_2$-regularization term into the objective from a perspective of robust optimization. 
The cardinality-constrained mean-CVaR portfolio optimization model is formulated as follows:
\begin{subequations}\label{prob:org}
\begin{alignat}{3}
    &\minimize_{a, v, \bm x, \bm z} &&\quad \frac{1}{2\gamma }\bm x^\top \bm x + a + v \label{eq:org_obj}\\
    &\subjectto && \quad v \geq  \frac{1}{1-\beta}\sum_{s\in \mathcal{S}}p_s\left[- (\bm r^{(s)})^\top \bm x-a\right]_+, \label{eq:org_constr_cvar}\\
                &&&\quad z_n  = 0 ~\Rightarrow~ x_n = 0 \quad (\forall n\in \mathcal{N}), \label{constr:logic_z}\\
               &&&\quad  \bm x \in \mathcal{X}, \quad \bm z \in \mathcal{Z}_N^k, \label{constr:xz}
\end{alignat}
\end{subequations}
where $v$ is an auxiliary decision variable, and $\gamma > 0$ is a regularization parameter.

\section{Cutting-plane Algorithms}\label{sec:bcpa}
In this section, we present our bilevel cutting-plane algorithm for solving the cardinality-constrained mean-CVaR portfolio optimization problem~\eqref{prob:org}. 

\subsection{Bilevel optimization reformulation}
Here, we extend the method of bilevel optimization reformulation~\cite{Bertsimas2018} to the mean-CVaR portfolio optimization problem~\eqref{prob:org}. 
Let us define $\bm Z := \mathrm{Diag}(\bm z)$ as a diagonal matrix whose diagonal entries are given by $\bm z$. 
We first eliminate the logical implication~\eqref{constr:logic_z} by replacing $\bm x$ with $\bm Z\bm x$ in Problem~\eqref{prob:org}.
We next reformulate Problem (\ref{prob:org}) as a bilevel optimization problem. 
Specifically, the \emph{upper-level problem} is posed as the following integer optimization problem: 
\begin{equation}\label{prob:master_prob}
    \minimize_{\bm z}~f(\bm z) \quad \subjectto~\bm z \in \mathcal{Z}_N^k, 
\end{equation}
and its objective function is defined by the following \emph{lower-level problem}: 
\begin{subequations}\label{prob:sub}
\begin{alignat}{4}
    f(\bm z)=&\minimize_{a, v, \bm x} &&\quad \frac{1}{2\gamma }\bm x^\top \bm x + a + v \label{eq:sub_obj_primal}\\
    &\subjectto &&\quad v \geq  \frac{1}{1-\beta}\sum_{s\in \mathcal{S}}p_s\left[-(\bm r^{(s)})^\top \bm Z \bm x-a\right]_+, \label{eq:sub_constr_cvar}\\
                &&&\quad \bm Z \bm x \in \mathcal{X}.
\end{alignat}
\end{subequations} 
Note that $(\bm Z \bm x)^\top \bm Z \bm x$ has been replaced with $\bm x^\top \bm x$ in the objective~\eqref{eq:sub_obj_primal}; this is because $\bm x = \bm Z \bm x$ holds after minimization~\eqref{eq:sub_obj_primal} as in Bertsimas and Cory-Wright~\cite{Bertsimas2018}. 

We then derive cutting planes for the upper-level problem~\eqref{prob:master_prob} by exploiting the dual formulation of the lower-level problem~\eqref{prob:sub}. 
For this purpose, Constraint~\eqref{eq:sub_constr_cvar}, which is nonlinear and nondifferentiable, needs to be transformed into tractable constraints. 
A commonly used method is the lifting representation \cite{Fabian2008,Rockafellar2000}, which converts Problem~\eqref{prob:sub} into the following optimization problem with linear constraints:
\begin{subequations}\label{prob:sub2}
\begin{alignat}{4}
    f(\bm z)=&\minimize_{a, \bm q, v, \bm x} &&\quad \frac{1}{2\gamma }\bm x^\top \bm x + a + v \label{eq:sub2_obj}\\
    &\subjectto &&\quad v \geq \frac{1}{1-\beta}\sum_{s \in \mathcal{S}} p_sq_s,\label{eq:sub2_constr_cvar} \\
    &&&\quad q_s\geq - (\bm r^{(s)})^\top \bm Z\bm x - a & \quad (\forall s\in \mathcal{S}), \label{eq:sub2_constr_lift1}\\
    &&&\quad q_s \geq 0 & \quad (\forall s\in \mathcal{S}), \label{eq:sub2_constr_lift2} \\
    &&&\quad \bm Z \bm x \in \mathcal{X},
\end{alignat}
\end{subequations}
where $\bm q := (q_1,q_2,\ldots,q_S)^{\top}$ is a vector of auxiliary decision variables.
The following theorem gives the dual formulation of Problem~\eqref{prob:sub2}. 
\begin{theorem}\label{thm:f_dual_lifting}
Suppose that Problem~\eqref{prob:sub2} is feasible. 
Then, the strong duality holds, and the dual formulation of Problem~\eqref{prob:sub2} is represented as follows:
\begin{subequations}\label{prob:sub_dual_lift}
\begin{alignat}{3}
    f(\bm z) =&\maximize_{\bm \alpha, \bm \zeta, \lambda, \bm \omega} &&\quad -\frac{\gamma}{2} \bm z^\top  (\bm \omega \circ \bm \omega) - \bm b^\top \bm \zeta + \lambda\\
    &\subjectto &&\quad \bm \omega \geq \sum_{s\in S}\alpha_s \bm r^{(s)}-\bm A^\top\bm \zeta +\lambda \bm 1 ,\\
    &&&\quad \sum_{s\in S}\alpha_s = 1,\\
    &&&\quad \alpha_s  \leq \frac{p_s}{1-\beta} \quad (\forall s\in \mathcal{S}),\\
    &&&\quad \bm \alpha\geq \bm 0,\quad \bm \zeta \geq \bm 0,
\end{alignat}
where $\bm \alpha:=(\alpha_1,\alpha_2,\ldots,\alpha_S)^{\top},~\bm \zeta\in \mathbb{R}^M,~\lambda\in \mathbb{R}$, and $\bm \omega \in \mathbb{R}^N$ are dual decision variables, and $\bm \omega \circ \bm \omega$ denotes the Hadamard product of the vector $\bm \omega$.
\end{subequations}
\end{theorem}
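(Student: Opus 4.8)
The plan is to derive the dual by writing the Lagrangian of the lifted problem~\eqref{prob:sub2}, eliminating the primal variables $a$, $v$, $\bm q$, and $\bm x$ one group at a time, and then reorganizing the result into the stated form. Before dualizing, I would first record that Problem~\eqref{prob:sub2} is a convex quadratic program whose every constraint is affine in $(a,\bm q,v,\bm x)$. For such problems the refined Slater condition holds automatically (affine constraints require only feasibility, not strict feasibility), so once the problem is feasible, strong duality follows and the dual optimum is attained; I would also note that the optimal value is finite, because $\bm Z\bm x\in\mathcal X$ confines $\bm Z\bm x$ to the bounded simplex, the regularizer forces $x_n=0$ whenever $z_n=0$, and the term $a+v$ inherits the coercivity of the CVaR function in $a$ (here $\tfrac{1}{1-\beta}>1$ and $\sum_s p_s=1$). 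This justifies writing $f(\bm z)=\max(\cdots)$ rather than a mere weak-duality inequality.

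Next I would attach multipliers to the four constraint families of~\eqref{prob:sub2}: a scalar $\theta\ge0$ to~\eqref{eq:sub2_constr_cvar}, variables $\alpha_s\ge0$ to the lifting inequalities~\eqref{eq:sub2_constr_lift1}, variables $\eta_s\ge0$ to $q_s\ge0$ in~\eqref{eq:sub2_constr_lift2}, and, after expanding $\bm Z\bm x\in\mathcal X$ into $\bm A\bm Z\bm x\le\bm b$, $\bm 1^\top\bm Z\bm x=1$, and $\bm Z\bm x\ge\bm 0$, the multipliers $\bm\zeta\ge\bm0$, a free $\lambda$, and $\bm\pi\ge\bm0$ respectively. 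Minimizing the Lagrangian termwise over the scalar variables pins down three of these: the coefficient of $v$ forces $\theta=1$, the coefficient of $a$ forces $\sum_{s}\alpha_s=1$, and the coefficient of each $q_s$ forces $\alpha_s+\eta_s=p_s/(1-\beta)$, so eliminating $\eta_s\ge0$ yields exactly the box constraint $\alpha_s\le p_s/(1-\beta)$.

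The only remaining inner minimization is over $\bm x$, and here the objective is an unconstrained strictly convex quadratic $\tfrac{1}{2\gamma}\bm x^\top\bm x+\bm c^\top\bm x$ whose linear part, using that $\bm Z$ is symmetric, is $\bm c=\bm Z\bigl(-\sum_{s}\alpha_s\bm r^{(s)}+\bm A^\top\bm\zeta+\lambda\bm 1-\bm\pi\bigr)$ up to the sign convention chosen for $\lambda$. Its minimum is $-\tfrac{\gamma}{2}\|\bm c\|^2$, and the crucial step is to simplify $\|\bm c\|^2=\bm d^\top\bm Z^2\bm d$ using the idempotency $\bm Z^2=\bm Z$ of the $0/1$ diagonal matrix, giving $\|\bm c\|^2=\bm z^\top(\bm d\circ\bm d)$. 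I would then absorb the sign-free nonnegativity multiplier by setting $\bm\omega:=\sum_{s}\alpha_s\bm r^{(s)}-\bm A^\top\bm\zeta+\lambda\bm 1+\bm\pi$; since $\bm\pi$ ranges over $\bm\pi\ge\bm0$, this is precisely a reparametrization of the inequality $\bm\omega\ge\sum_{s}\alpha_s\bm r^{(s)}-\bm A^\top\bm\zeta+\lambda\bm 1$, and $\bm z^\top(\bm d\circ\bm d)=\bm z^\top(\bm\omega\circ\bm\omega)$ because squaring is sign-insensitive. Collecting the surviving constant terms $-\bm b^\top\bm\zeta+\lambda$ then reproduces~\eqref{prob:sub_dual_lift} exactly.

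I expect the main obstacle to be the bookkeeping around the nonnegativity constraint $\bm Z\bm x\ge\bm0$: its multiplier $\bm\pi$ does not appear in any of the clean coefficient equations but instead must be folded into the slack of the $\bm\omega$-inequality, and getting the sign of the free multiplier $\lambda$ consistent with the stated $+\lambda$ objective and $+\lambda\bm 1$ constraint requires care. The other delicate point is recognizing that the idempotency $\bm Z^2=\bm Z$ (valid only because $\bm z\in\{0,1\}^N$) is exactly what converts the Euclidean norm into the $\bm z$-weighted quadratic $\bm z^\top(\bm\omega\circ\bm\omega)$ appearing in the dual objective; without it the dual would not take the desired separable form.
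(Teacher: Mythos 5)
Your proposal follows essentially the same route as the paper's proof in Appendix~A: form the Lagrangian of the lifted QP with a multiplier on each constraint block, invoke strong duality from feasibility plus affinity of all constraints, pin down the scalar multipliers termwise (yielding $\sum_s\alpha_s=1$ and $\alpha_s\le p_s/(1-\beta)$ after eliminating the $\bm q$-multipliers), minimize the remaining quadratic in $\bm x$, use the idempotency $\bm Z^2=\bm Z$ to obtain $\bm z^\top(\bm\omega\circ\bm\omega)$, and fold $\bm\pi\ge\bm 0$ into the inequality defining $\bm\omega$. Your added coercivity argument for the finiteness of $f(\bm z)$ is a harmless refinement of a point the paper leaves implicit.
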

\begin{proof}
See \ref{sec:proof_lift}.
\qed 
\end{proof}

\begin{remark}
Note that the dual problem~\eqref{prob:sub_dual_lift} is always feasible with $\bm \alpha = \bm p$, $\bm \zeta = \bm 0$, $\lambda = 0$, and $\bm \omega = \sum_{s\in \mathcal{S}}p_s\bm r^{(s)}$. 
From the strong duality, the dual problem~\eqref{prob:sub_dual_lift} is unbounded (i.e., $f(\bm z) = +\infty$) if and only if the corresponding primal problem~\eqref{prob:sub2} is infeasible. 
Our objective is to minimize $f(\bm z)$, so we can assume without loss of generality that the primal problem \eqref{prob:sub2} is feasible.
\end{remark}

As in Bertsimas and Cory-Wright~\cite{Bertsimas2018}, Theorem~\ref{thm:f_dual_lifting} shows two key properties of the function $f(\bm z)$. 

\begin{lemma}[Convexity]\label{lem:convex}
The function $f(\bm z)$ is convex in $\bm z\in [0,1]^N$. 
\end{lemma}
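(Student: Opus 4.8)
The plan is to read convexity directly off the dual representation established in Theorem~\ref{thm:f_dual_lifting}, following the Bertsimas and Cory-Wright argument. The decisive structural feature of the dual problem~\eqref{prob:sub_dual_lift} is that the parameter $\bm z$ appears only in the objective, and it enters affinely there: writing $\bm z^\top(\bm\omega\circ\bm\omega) = \sum_{n\in\mathcal{N}} z_n \omega_n^2$, the dual objective $-\frac{\gamma}{2}\sum_{n\in\mathcal{N}} z_n\omega_n^2 - \bm b^\top\bm\zeta + \lambda$ is, for every fixed feasible tuple $(\bm\alpha,\bm\zeta,\lambda,\bm\omega)$, an affine function of $\bm z$. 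Equally important, the dual-feasible region is described by constraints that make no reference to $\bm z$, so it is a fixed set independent of $\bm z$.

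First I would invoke Theorem~\ref{thm:f_dual_lifting} together with the accompanying Remark to write, for each $\bm z$,
\[
f(\bm z) = \sup_{(\bm\alpha,\bm\zeta,\lambda,\bm\omega)\in\mathcal{D}} \left\{-\frac{\gamma}{2}\sum_{n\in\mathcal{N}} z_n\omega_n^2 - \bm b^\top\bm\zeta + \lambda\right\},
\]
where $\mathcal{D}$ denotes the (fixed, $\bm z$-independent) dual-feasible region. Here I should note that the dual derivation treats $\bm z$ merely as the diagonal entries of $\bm Z := \Diag(\bm z)$ and nowhere exploits integrality, so the representation is valid for all $\bm z\in[0,1]^N$, not only for binary $\bm z$. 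The Remark guarantees $\mathcal{D}\neq\emptyset$, so the supremum is taken over a nonempty index set and hence $f(\bm z) > -\infty$ for every $\bm z$.

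Next I would apply the standard convexity principle that a pointwise supremum of a family of affine (a fortiori convex) functions is convex. Since each map $\bm z \mapsto -\frac{\gamma}{2}\sum_{n\in\mathcal{N}} z_n\omega_n^2 - \bm b^\top\bm\zeta + \lambda$ is affine in $\bm z$, the supremum $f$ is convex on $[0,1]^N$, with values in $(-\infty,+\infty]$; the value $+\infty$ occurs exactly on those $\bm z$ for which the primal problem~\eqref{prob:sub2} is infeasible, which is consistent with the convention in the Remark.

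The main, and essentially only, obstacle is justifying the structural separation claimed above: that $\bm z$ enters the dual solely through the affine term $\sum_{n\in\mathcal{N}} z_n\omega_n^2$ and that $\mathcal{D}$ is genuinely $\bm z$-independent. Both facts are immediate from the explicit form of~\eqref{prob:sub_dual_lift}, so no nontrivial computation is required; once strong duality has delivered the dual representation, the convexity of $f$ follows as a one-line consequence of the supremum-of-affine-functions argument.
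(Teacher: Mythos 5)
Your proposal is correct and is exactly the argument the paper intends: the paper's proof is just a citation of Proposition~1.2.4 in Bertsekas \textit{et al.}, i.e., the fact that $f(\bm z)$ is a pointwise supremum over the $\bm z$-independent dual-feasible set of functions affine in $\bm z$, which you spell out in full. The only slight imprecision is your claim that the dual derivation ``nowhere exploits integrality'': the simplification $\bm\omega^\top\bm Z^2\bm\omega=\bm z^\top(\bm\omega\circ\bm\omega)$ uses $z_n^2=z_n$, so for fractional $\bm z$ the convex function being analyzed is the dual-formula extension of $f$ to $[0,1]^N$ rather than the value of the primal with $\bm Z=\Diag(\bm z)$ --- but this is the standard convention here and does not affect the conclusion.
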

\begin{proof}
See, for example, Proposition~1.2.4 in Bertsekas \textit{et al.}~\cite{bertsekas2003convex}. 
\qed 
\end{proof}

\begin{lemma}[Subgradient]\label{lem:subgradient}
Suppose that Problem~\eqref{prob:sub2} is feasible, and that $\bm \omega^\star(\bm z)$ is an optimal solution of $\bm{\omega}$ to Problem~\eqref{prob:sub_dual_lift}. 
Then, a subgradient of the function $f(\bm z)$ is given by 
    \begin{equation}\label{eq:subgradient}
        \bm g (\bm z) := -\frac{\gamma}{2}\bm \omega^\star(\bm z)\circ\bm \omega^\star(\bm z) \in \partial f(\bm z). 
    \end{equation}
\end{lemma}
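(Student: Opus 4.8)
The plan is to exploit the fact, established in Theorem~\ref{thm:f_dual_lifting}, that $f(\bm z)$ equals the optimal value of the dual problem~\eqref{prob:sub_dual_lift}, and to observe that the \emph{feasible region} of this dual problem does not depend on $\bm z$: the variable $\bm z$ enters only through the objective term $-\frac{\gamma}{2}\bm z^\top(\bm\omega\circ\bm\omega)$. Writing $\bm w := (\bm\alpha,\bm\zeta,\lambda,\bm\omega)$ and letting $\mathcal{D}$ denote the ($\bm z$-independent) dual feasible set, we may express
\begin{equation*}
f(\bm z) = \max_{\bm w\in\mathcal{D}}\; h(\bm z,\bm w), \qquad h(\bm z,\bm w) := -\frac{\gamma}{2}\bm z^\top(\bm\omega\circ\bm\omega) - \bm b^\top\bm\zeta + \lambda.
\end{equation*}
For each fixed $\bm w\in\mathcal{D}$, the map $\bm z\mapsto h(\bm z,\bm w)$ is affine, so $f$ is a pointwise maximum of affine functions (which re-derives the convexity of Lemma~\ref{lem:convex}), and the standard subgradient rule for a supremum of convex functions suggests that the gradient of the active affine piece yields a subgradient.

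Concretely, I would verify the subgradient inequality directly. Let $\bm w^\star(\bm z) = (\bm\alpha^\star,\bm\zeta^\star,\lambda^\star,\bm\omega^\star(\bm z))$ attain the maximum defining $f(\bm z)$, so that $f(\bm z) = h(\bm z,\bm w^\star(\bm z))$. Because $\mathcal{D}$ is independent of $\bm z$, this same $\bm w^\star(\bm z)$ remains feasible for the dual problem at any other point $\bar{\bm z}\in[0,1]^N$; since $f(\bar{\bm z})$ is the maximum over $\mathcal{D}$, it follows that
\begin{equation*}
f(\bar{\bm z}) \;\geq\; h(\bar{\bm z},\bm w^\star(\bm z)) \;=\; -\frac{\gamma}{2}\bar{\bm z}^\top\bigl(\bm\omega^\star(\bm z)\circ\bm\omega^\star(\bm z)\bigr) - \bm b^\top\bm\zeta^\star + \lambda^\star.
\end{equation*}
Subtracting the identity $f(\bm z) = -\frac{\gamma}{2}\bm z^\top(\bm\omega^\star(\bm z)\circ\bm\omega^\star(\bm z)) - \bm b^\top\bm\zeta^\star + \lambda^\star$ cancels the $\bm\zeta^\star$ and $\lambda^\star$ terms and leaves
\begin{equation*}
f(\bar{\bm z}) - f(\bm z) \;\geq\; -\frac{\gamma}{2}(\bar{\bm z}-\bm z)^\top\bigl(\bm\omega^\star(\bm z)\circ\bm\omega^\star(\bm z)\bigr) = \bm g(\bm z)^\top(\bar{\bm z}-\bm z),
\end{equation*}
which is exactly the defining inequality for $\bm g(\bm z)\in\partial f(\bm z)$.

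I do not expect a serious obstacle here; the argument is essentially Danskin-type reasoning specialized to an objective that is linear in $\bm z$ over a fixed feasible set, so the cancellation of all terms other than the $\bm\omega$-dependent one is automatic. The single point requiring care is the standing feasibility assumption: by the remark following Theorem~\ref{thm:f_dual_lifting}, feasibility of the primal~\eqref{prob:sub2} guarantees via strong duality that $f(\bm z)$ is finite and that the dual maximum is attained, so $\bm\omega^\star(\bm z)$ is well defined and the inequality above holds for every $\bm z$ in the relevant domain $[0,1]^N$. It is worth emphasizing that the resulting subgradient depends only on the optimal $\bm\omega$-component and not on the remaining dual variables, which is precisely what makes it convenient for generating cuts in the upper-level problem~\eqref{prob:master_prob}.
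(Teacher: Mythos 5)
Your argument is correct: it is the standard Danskin-type verification that the optimal dual solution's affine piece supplies a subgradient of the pointwise maximum, and the cancellation you describe goes through exactly because the dual feasible set is independent of $\bm z$. The paper proves this lemma simply by citing Proposition~8.1.1 of Bertsekas \emph{et al.}, which is precisely the supremum-of-affine-functions result you have written out in full, so your proof is a self-contained version of the same approach.
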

\begin{proof}
See, for example, Proposition~8.1.1 in Bertsekas \textit{et al.}~\cite{bertsekas2003convex}. 
\qed 
\end{proof}

Lemmas~\ref{lem:convex} and \ref{lem:subgradient} verify that for each $\hat{\bm z} \in \mathcal{Z}_N^k$, Problem~\eqref{prob:sub_dual_lift} with $\bm z = \hat{\bm z}$ yields a linear underestimator of $f(\bm z)$ for $\bm{z} \in [0,1]^N$ as follows: 
\begin{equation}\label{eq:cut}
    f(\bm z) \geq f(\hat{\bm z}) +  \bm g(\hat{\bm z})^\top (\bm z-\hat{\bm z}).
\end{equation}

\subsection{Upper-level cutting-plane algorithm}\label{sec:upper-level_cpa}
Here, we extend the cutting-plane algorithm~\cite{Bertsimas2018} to the upper-level problem~\eqref{prob:master_prob} for mean-CVaR portfolio optimization.
We refer to this algorithm as the \emph{upper-level cutting-plane algorithm}, which finds a sequence of solutions to relaxed versions of Problem~\eqref{prob:master_prob}.

Let $\theta_{\text{LB}}$ be a lower bound of the optimal objective value of Problem~\eqref{prob:master_prob}, which can be obtained, for example, by solving the associated continuous relaxation problem. 
We first define the initial feasible region as 
\begin{equation}\label{eq:initial_relax_region}
    \mathcal{F}_1 := \{(\bm z,\theta)\in  \mathcal{Z}_N^k\times \mathbb{R}\mid \theta \geq \theta_{\text{LB}}\},
\end{equation}
where $\theta$ is an auxiliary decision variable that serves as a lower bound on $f(\bm z)$.

At the $t$th iteration~($t\geq 1$), our algorithm solves the following optimization problem:
\begin{equation}\label{prob:master_relax}
    \minimize_{\bm z, \theta}~\theta\quad \subjectto~(\bm z,\theta) \in \mathcal{F}_t,
\end{equation}
where $\mathcal{F}_t$ is a relaxed feasible region at the $t$th iteration such that $\mathcal{F}_t \subseteq \mathcal{F}_1$.
According to Eq.~\eqref{eq:initial_relax_region}, the objective value of Problem \eqref{prob:master_relax} is bounded below. 
Therefore, unless $\mathcal{F}_t = \emptyset$, Problem \eqref{prob:master_relax} has an optimal solution, which is denoted by 
$(\bm z_t,\theta_t)$. 

After obtaining $(\bm z_t,\theta_t)$, we solve the dual lower-level problem \eqref{prob:sub_dual_lift} with $\bm z = \bm z_t$.
If Problem \eqref{prob:sub_dual_lift} is unbounded, Problem \eqref{prob:sub2} is infeasible because of the strong duality. 
In this case, we update the feasible region to cut off the solution $\bm z_t$ as follows:
\begin{equation}\label{eq:cut_infeas_sol}
    \mathcal{F}_{t+1} \leftarrow  \mathcal{F}_t \cap \{(\bm z,\theta)\in \mathcal{Z}_N^k\times \mathbb{R}\mid \bm z_t^\top (\bm 1- \bm z) + (\bm 1-\bm z_t)^\top \bm z\geq 1\}.
\end{equation}
If Problem \eqref{prob:sub_dual_lift} is feasible, we obtain the function value $f(\bm z_t)$ and its subgradient $\bm g(\bm z_t)$ as in Lemma \ref{lem:subgradient}. 

If $f(\bm z_t) - \theta_t \le \varepsilon$ with sufficiently small $\varepsilon \ge 0$, then $\bm z_t$ is an $\varepsilon$-optimal solution to Problem~\eqref{prob:master_prob}, which means that
\begin{equation}\label{eq:upper_optimality}
    f^\star \le f(\bm z_t) \le f^\star + \varepsilon,
\end{equation}
where $f^\star$ is the optimal objective value of Problem~\eqref{prob:master_prob}.
In this case, we terminate the algorithm with the $\varepsilon$-optimal solution $\bm z_t$.
Otherwise, we add the constraint~\eqref{eq:cut} to the feasible region: 
\begin{equation}\label{eq:cut_subgrad}
    \mathcal{F}_{t+1} \leftarrow \mathcal{F}_t \cap \{(\bm z,\theta)\in \mathcal{Z}_N^k\times \mathbb{R}\mid \theta \geq f(\bm z_t) +  \bm g(\bm z_t)^\top (\bm z - \bm z_t)\}.
\end{equation}
Note that this update cuts off the solution $(\bm z_t,\theta_t)$ because $\theta_t < f(\bm z_t)$.

After updating the feasible region, we set $t \leftarrow t + 1$ and solve Problem~\eqref{prob:master_relax} again. 
This procedure is repeated until an $\varepsilon$-optimal solution~$\hat{\bm z}$ is found.
After termination of the algorithm, we can compute the corresponding portfolio by solving Problem~\eqref{prob:sub2} with $\bm z = \hat{\bm z}$. 
Our upper-level cutting-plane algorithm is summarized by Algorithm \ref{alg:upper_level_cpa}.

 \begin{algorithm}[ht]
 \caption{Upper-level cutting-plane algorithm for solving Problem~\eqref{prob:master_prob}} 
 \label{alg:upper_level_cpa}
 \begin{algorithmic} 
 \normalsize
 \STATE \begin{description}
 \item[\textbf{Step 0~\textsf{(Initialization)}}] Let $\varepsilon \geq 0$ be a tolerance for optimality. 
 Define the feasible region~$\mathcal{F}_1$ as in Eq.~\eqref{eq:initial_relax_region}.
 Set $t\leftarrow 1$ and $\text{UB}_0 \leftarrow \infty$.
 \item[\textbf{Step 1 \textsf{(Relaxed Problem)}}] Solve Problem \eqref{prob:master_relax}. Let $(\bm z_t, \theta_t)$ be an optimal solution, and set $\text{LB}_t \leftarrow \theta_t$. 
\item[\textbf{Step 2 \textsf{(Cut Generation)}}] Solve Problem~\eqref{prob:sub_dual_lift} with $\bm z = \bm z_t$ to calculate $f(\bm z_t)$ and $\bm \omega^\star(\bm z_t)$. 
\renewcommand{\labelenumi}{(\alph{enumi})}
\renewcommand{\labelenumii}{\roman{enumii}.}
\begin{enumerate}%[(a)]
    \setcounter{enumi}{0}
    \item If Problem \eqref{prob:sub_dual_lift} is unbounded, set $\text{UB}_{t} \leftarrow \text{UB}_{t-1}$ and update the feasible region as in Eq.~\eqref{eq:cut_infeas_sol}.
    \item If Problem \eqref{prob:sub_dual_lift} is feasible, perform the following procedures: 
        \begin{enumerate}
        \setcounter{enumii}{0}
            \item Calculate $\bm g(\bm z_t)$ as in Eq.~\eqref{eq:subgradient}. 
            \item Update the feasible region as in Eq.~\eqref{eq:cut_subgrad}. 
            \item If $f(\bm z_t) < \text{UB}_{t-1}$, set $\text{UB}_{t} \leftarrow f(\bm z_t)$ and $(\hat{\bm z}, \hat{\theta}) \leftarrow (\bm z_t, \theta_t)$; otherwise, set $\text{UB}_{t} \leftarrow \text{UB}_{t-1}$. 
        \end{enumerate}
\end{enumerate}
\item[\textbf{Step 3 \textsf{(Termination Condition)}}] If $\text{UB}_t-\text{LB}_t\leq \varepsilon$, terminate the algorithm with the $\varepsilon$-optimal solution~$\hat{\bm z}$.
\item[\textbf{Step 4}] Set $t\leftarrow t+1$ and return to Step 1.
\end{description}
 \end{algorithmic} 
 \end{algorithm}
 
\subsection{Lower-level cutting-plane algorithm} \label{sec:lower-level_cpa}
Note that Step 2 of Algorithm \ref{alg:upper_level_cpa} solves Problem \eqref{prob:sub_dual_lift}, which contains $\mathcal{O}(S)$ decision variables and $\mathcal{O}(S)$ constraints; so, solving it directly is computationally expensive especially when the number of scenarios is very large. 
To avoid this difficulty, we propose solving the primal lower-level problem~\eqref{prob:sub} by instead using the other cutting-plane algorithm~\cite{Ahmed2006,Haneveld2006,Kunzi-Bay2006,Takano2014}. 
Hereinafter, we refer to this algorithm as the \emph{lower-level cutting-plane algorithm} to distinguish it from the upper-level cutting-plane algorithm (Algorithm \ref{alg:upper_level_cpa}). 
We will explain how to derive dual solutions for cut generation in Section~\ref{sec:efficient_calculation}. 

We first use the cutting-plane representation~\cite{Fabian2008,Kunzi-Bay2006} to reformulate Problem~\eqref{prob:sub} as follows:
\begin{subequations}\label{prob:sub3}
\begin{alignat}{4}
    f(\bm z)=&\minimize_{a, v, \bm x} &&\quad \frac{1}{2\gamma }\bm x^\top \bm x + a + v \label{eq:sub3_obj}\\
    &\subjectto &&\quad v \geq \frac{1}{1-\beta}\sum_{s \in \mathcal{J}}p_s(-(\bm r^{(s)})^\top \bm Z\bm x - a) \quad(\forall \mathcal{J}\subseteq \mathcal{S}),\label{eq:sub3_constr_cvar} \\
    &&&\quad \bm Z\bm x \in \mathcal{X}, \label{eq:sub3_constr_x}
\end{alignat}
\end{subequations}
where $\mathcal{J}$ denotes a subset of the scenario set $\mathcal{S}$.
We pick out the constraint~\eqref{eq:sub3_constr_cvar} with $\mathcal{J}=\mathcal{S}$ to define the initial feasible region:
\begin{equation}\label{eq:sub_primal_relaxed_region}
    \mathcal{G}_1(\bm z) := \left\{(a, v, \bm x)\in \mathbb{R} \times \mathbb{R} \times \mathbb{R}^N ~\middle|~\begin{aligned}&v \geq \frac{1}{1-\beta}\sum_{s\in \mathcal{S}}p_s(-(\bm r^{(s)})^\top \bm Z\bm x - a),\\ &v\geq 0,~\bm Z\bm x\in \mathcal{X}\end{aligned} \right\}.
\end{equation}
It is clear from Eq.~\eqref{eq:org_constr_cvar} that $v \geq 0$ is a valid constraint. 
At the $t$th iteration~($t\geq 1$), our algorithm solves the following optimization problem:
\begin{subequations}\label{prob:sub_primal_cutting_relaxed}
\begin{alignat}{4}
    &\minimize_{a, v, \bm x} &&\quad \frac{1}{2\gamma }\bm x^\top \bm x + a + v \\
    &\subjectto &&\quad (a, v, \bm x)\in \mathcal{G}_t(\bm z),
\end{alignat}
\end{subequations}
where $\mathcal{G}_t(\bm z)$ is a relaxed feasible region at the $t$th iteration such that $\mathcal{G}_t(\bm z) \subseteq \mathcal{G}_1(\bm z)$. 
The objective value of Problem \eqref{prob:sub_primal_cutting_relaxed} is bounded below by the initial feasible region~\eqref{eq:sub_primal_relaxed_region}. 
Therefore, unless $\mathcal{G}_t(\bm z) = \emptyset$, Problem \eqref{prob:sub_primal_cutting_relaxed} has an optimal solution, which is denoted by $(a_t, v_t, \bm x_t)$. 
Note also that Problem~\eqref{prob:sub_primal_cutting_relaxed} contains only $N+2$ decision variables, which are independent of the number of scenarios.

After obtaining $(a_t, v_t, \bm x_t)$, we define a subset of scenarios
\begin{equation}\label{eq:def_J}
    \mathcal{J}_t \leftarrow \{s\in \mathcal{S}\mid -(\bm r^{(s)})^\top \bm Z\bm x_t - a_t>0\}
\end{equation}
and calculate 
\begin{equation} \label{eq:calc_y'}
    v'_t \leftarrow \frac{1}{1-\beta}\sum_{s\in \mathcal{J}_t}p_s(-(\bm r^{(s)})^\top \bm Z\bm x_t - a_t).
\end{equation}
Because the revised solution~$(a_t, v'_t, \bm x_t)$ satisfies all the constraints of Problem~\eqref{prob:sub3}, this solution gives an upper bound of $f(\bm z)$. 

If $v'_t - v_t \leq \delta$ with sufficiently small $\delta \ge 0$, we terminate the algorithm with the $\delta$-optimal solution~$(a_t, v'_t, \bm x_t)$ to Problem \eqref{prob:sub3}. 
Otherwise, we update the feasible region as follows:
\begin{equation}\label{eq:cut_cvar_constraint}
    \mathcal{G}_{t+1}(\bm z)\leftarrow \mathcal{G}_t(\bm z) \cap \left\{(a, v, \bm x)\in \mathbb{R} \times \mathbb{R} \times \mathbb{R}^N ~\middle|~ v \geq \frac{1}{1-\beta}\sum_{s\in \mathcal{J}_t}p_s(-(\bm r^{(s)})^\top \bm Z\bm x-a)\right\}.
\end{equation}
Because $v_t < v'_t$, this update cuts off the solution $(a_t, v_t, \bm x_t)$.

Next, we set $t \leftarrow t + 1$ and solve Problem~\eqref{prob:sub_primal_cutting_relaxed} again. 
The lower-level cutting-plane algorithm is summarized by Algorithm~\ref{alg:kunzi-bay_cpa}.
As shown in previous studies~\cite{Ahmed2006,Haneveld2006,Kunzi-Bay2006,Takano2014}, Algorithm~\ref{alg:kunzi-bay_cpa} terminates in a finite number of iterations and outputs a $\delta$-optimal solution $(\hat{a},\hat{v},\hat{\bm x})$ to the lower-level problem~\eqref{prob:sub}. 
The resultant family of scenario subsets $\mathcal{K}_{\delta}(\bm z)$ is also provided by Algorithm~\ref{alg:kunzi-bay_cpa}. 
This algorithm is empirically much faster than solving Problem~\eqref{prob:sub2} based on the lifting representation when the number of scenarios is very large. 

Suppose that Algorithm~\ref{alg:kunzi-bay_cpa} terminates at the $T$th iteration and outputs a $\delta$-optimal solution $(\hat{a},\hat{v},\hat{\bm x})$, where $(a_T, v_T, \bm x_T)$ is an optimal solution to Problem \eqref{prob:sub_primal_cutting_relaxed} with $t=T$. 
We then obtain lower and upper bounds on $f(\bm z)$ as follows:
\begin{equation}\label{eq:ep_opt}
f(\bm z) - \delta \leq f_{\delta}(\bm z) \leq f(\bm z) \leq \hat{f}_{\delta}(\bm z) \leq f(\bm z) + \delta, 
\end{equation}
where 
\begin{align*}
    &f_\delta(\bm z):= \frac{1}{2\gamma }\bm x_T^\top \bm x_T + a_T + v_T, \quad \hat{f}_\delta(\bm z):=\frac{1}{2\gamma }\hat{\bm x}^\top \hat{\bm x} + \hat{a} + \hat{v}.   
\end{align*}

 \begin{algorithm}[ht]
 \caption{Lower-level cutting-plane algorithm for solving Problem~\eqref{prob:sub3}} 
 \label{alg:kunzi-bay_cpa}
 \begin{algorithmic} 
 \normalsize
 \STATE \begin{description}
 \item[\textbf{Step 0 \textsf{(Initialization)}}] Let $\delta\geq 0$ be a tolerance for optimality. 
 Define the feasible region $\mathcal{G}_1(\bm z)$ as in Eq.~\eqref{eq:sub_primal_relaxed_region}.
 \item[\textbf{Step 1 \textsf{(Relaxed Problem)}}] Solve Problem~\eqref{prob:sub_primal_cutting_relaxed}. 
 Let $(a_t, v_t, \bm x_t)$ be an optimal solution. 
 \item[\textbf{Step 2 \textsf{(Upper Bound)}}] 
 Calculate $\mathcal{J}_t$ and $v'_t$ as in Eqs.~\eqref{eq:def_J}~and~\eqref{eq:calc_y'}. 
 \item[\textbf{Step 3 \textsf{(Termination Condition)}}] If $v'_t - v_t \leq \delta$, terminate the algorithm with the $\delta$-optimal solution~$(\hat{a},\hat{v},\hat{\bm x}) := (a_t, v'_t, \bm x_t)$ and the resultant family of scenario subsets $\mathcal{K}_{\delta}(\bm z) := \{\mathcal{S},\mathcal{J}_1,\mathcal{J}_2,\ldots,\mathcal{J}_t\}$.
\item[\textbf{Step 4 \textsf{(Cut Generation)}}] Update the feasible region as in Eq.~\eqref{eq:cut_cvar_constraint}.
\item[\textbf{Step 5}] Set $t \leftarrow t+1$ and return to Step 1.
 \end{description}
 \end{algorithmic} 
 \end{algorithm}

\subsection{Efficient cut generation for the upper-level problem}\label{sec:efficient_calculation}
Now, we turn to the computation of dual solutions from primal solutions provided by the lower-level cutting-plane algorithm (Algorithm \ref{alg:kunzi-bay_cpa}). 
Such dual solutions are required for cut generation in the upper-level cutting-plane algorithm (Algorithm~\ref{alg:upper_level_cpa}). 

Suppose that Algorithm~\ref{alg:kunzi-bay_cpa} results in the following relaxed problem with $\mathcal{K}=\mathcal{K}_{\delta}(\bm z)$:
\begin{subequations}\label{prob:sub_primal_part}
    \begin{alignat}{4}
        f_{\mathcal{K}}(\bm z):=&\minimize_{a, v, \bm x} &&\quad \frac{1}{2\gamma }\bm x^\top \bm x + a + v\\
    &\subjectto &&\quad v \geq \frac{1}{1-\beta}\sum_{s \in \mathcal{J}}p_s( -(\bm r^{(s)})^\top \bm Z\bm x - a) &&\quad (\forall \mathcal{J} \in \mathcal{K}),\label{prob:sub_primal_part_const1}\\
    &&&\quad v \geq 0, \quad \bm Z\bm x \in \mathcal{X}. \label{prob:sub_primal_part_const2}
\end{alignat}
\end{subequations}
The following theorem derives the dual formulation of Problem~\eqref{prob:sub_primal_part}.  

\begin{theorem}\label{thm:f_dual_cutting}
Suppose that Problem~\eqref{prob:sub_primal_part} is feasible. 
Then, the strong duality holds, and the dual formulation of Problem \eqref{prob:sub_primal_part} can be written as
\begin{subequations} \label{prob:sub_dual_part}
\begin{alignat}{3}
    f_{\mathcal{K}}(\bm z)=&\maximize_{\bm \alpha, \bm \zeta, \lambda, \bm \omega} &&\quad -\frac{\gamma}{2} \bm z^\top (\bm \omega \circ \bm \omega) - \bm b^\top \bm \zeta + \lambda\label{prob:sub_dual_part_obj}\\
    &\subjectto && \quad \bm \omega \geq \sum_{\mathcal J \in \mathcal{K}}\alpha_{\mathcal{J}} \sum_{s\in \mathcal J}p_s \bm r^{(s)} -\bm A^\top \bm \zeta +\lambda \bm 1, \label{prob:sub_dual_part_const1}\\
    &&&\quad \sum_{\mathcal J \in \mathcal{K}} \alpha_{\mathcal{J}} \leq 1,\label{prob:sub_dual_part_const2}\\
    &&&\quad \sum_{\mathcal{J} \in \mathcal{K}}\alpha_{\mathcal{J}} \sum_{s\in \mathcal{J}}p_s = 1-\beta,\label{prob:sub_dual_part_const3}\\
    &&&\quad \bm \alpha \geq \bm 0, \quad \bm \zeta\geq \bm 0, \label{prob:sub_dual_part_const4}
\end{alignat}
\end{subequations}
where $\bm \alpha := (\alpha_{\mathcal{J}})_{\mathcal{J} \in \mathcal{K}}$,~$\bm \zeta\in \mathbb{R}^M,~\lambda\in \mathbb{R},$ and $\bm \omega \in \mathbb{R}^N$ are dual decision variables. 
\end{theorem}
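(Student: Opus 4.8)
The plan is to establish strong duality and then identify the dual program by forming the Lagrangian of Problem~\eqref{prob:sub_primal_part}, minimizing out each primal variable in closed form, and reading off the resulting constraints on the dual multipliers. This mirrors the derivation behind Theorem~\ref{thm:f_dual_lifting}; the only structural difference is that the lifting variables are absent here, so the factor $\frac{1}{1-\beta}$ now sits directly inside the cut constraints~\eqref{prob:sub_primal_part_const1} rather than being carried by a separate multiplier attached to a $\bm q$-type variable.

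First I would justify strong duality. Problem~\eqref{prob:sub_primal_part} is a convex quadratic program: the objective $\frac{1}{2\gamma}\bm x^\top\bm x + a + v$ is convex because $\gamma>0$, and all of \eqref{prob:sub_primal_part_const1}--\eqref{prob:sub_primal_part_const2} are linear in $(a,v,\bm x)$. Under the standing feasibility assumption, a convex quadratic objective over a nonempty polyhedron has no duality gap and the dual optimum is attained, by the standard duality theory for quadratic programs with purely polyhedral constraints (no Slater-type interior condition is required). This lets me equate $f_{\mathcal{K}}(\bm z)$ with the optimal value of the Lagrangian dual.

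Next I would introduce multipliers $\alpha_{\mathcal{J}}\ge 0$ for each cut~\eqref{prob:sub_primal_part_const1} with $\mathcal{J}\in\mathcal{K}$, $\mu\ge 0$ for $v\ge 0$, $\bm\zeta\ge 0$ for $\bm A\bm Z\bm x\le\bm b$, a free $\lambda$ for $\bm 1^\top\bm Z\bm x=1$, and $\bm\pi\ge 0$ for $\bm Z\bm x\ge\bm 0$, and then minimize the Lagrangian over $(a,v,\bm x)$ one block at a time. Minimizing over the free variable $a$ forces its coefficient to vanish, giving the equality~\eqref{prob:sub_dual_part_const3}; minimizing over $v$ and eliminating $\mu\ge 0$ gives the inequality~\eqref{prob:sub_dual_part_const2}; and minimizing the strictly convex quadratic in $\bm x$ admits a closed-form minimizer, after which completing the square produces the $-\frac{\gamma}{2}\bm z^\top(\bm\omega\circ\bm\omega)$ term. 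Here I would set $\bm\omega$ equal to the linear coefficient vector assembled from $\bm\alpha$, $\bm\zeta$, $\lambda$, and $\bm\pi$; because $\bm\omega$ appears in the objective only through $-\frac{\gamma}{2}\bm z^\top(\bm\omega\circ\bm\omega)$, the auxiliary $\bm\pi\ge 0$ can be chosen to realize any $\bm\omega$ meeting the bound, so projecting it out yields exactly the inequality~\eqref{prob:sub_dual_part_const1} without changing the optimal value, while the binary identity $z_n^2=z_n$ collapses $\sum_n z_n^2\omega_n^2$ into $\bm z^\top(\bm\omega\circ\bm\omega)$. Collecting the leftover constants $-\bm b^\top\bm\zeta$ and $+\lambda$ then gives the dual objective~\eqref{prob:sub_dual_part_obj}.

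The main obstacle I anticipate is bookkeeping rather than conceptual: tracking the factor $\frac{1}{1-\beta}$ and the scenario probabilities $p_s$ consistently through the $a$-, $v$-, and $\bm x$-stationarity conditions, and fixing the normalization of $\bm\alpha$ so that all three dual constraints~\eqref{prob:sub_dual_part_const1}--\eqref{prob:sub_dual_part_const3} emerge simultaneously in the stated form. A rescaling $\alpha_{\mathcal{J}}\mapsto c\,\alpha_{\mathcal{J}}$ moves the $\frac{1}{1-\beta}$ between constraint~\eqref{prob:sub_dual_part_const1} and constraints~\eqref{prob:sub_dual_part_const2}--\eqref{prob:sub_dual_part_const3}, so this accounting must be carried out carefully and cross-checked against the special case $\mathcal{K}=\{\mathcal{S}\}$ and against Theorem~\ref{thm:f_dual_lifting}, whose dual $\sum_{s}\alpha_s=1$, $\alpha_s\le \frac{p_s}{1-\beta}$ should be recovered when each block $\mathcal{J}$ is a singleton. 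Everything else in the argument is routine convex-analysis manipulation.
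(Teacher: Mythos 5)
Your proposal is correct and follows essentially the same route as the paper's own proof in \ref{sec:proof_cutting}: strong duality for a feasible convex quadratic program with purely linear constraints, the same set of Lagrange multipliers (your $\mu$ is the paper's $\xi$), stationarity in $a$, $v$, and $\bm x$ yielding \eqref{prob:sub_dual_part_const3}, \eqref{prob:sub_dual_part_const2}, and the completed-square term, followed by elimination of $\mu$ and $\bm\pi$ and the identity $z_n^2=z_n$. The $\frac{1}{1-\beta}$ bookkeeping you flag is indeed the one delicate point: the appendix derivation keeps that factor in front of $\sum_{\mathcal{J}\in\mathcal{K}}\alpha_{\mathcal{J}}\sum_{s\in\mathcal{J}}p_s\bm r^{(s)}$ in the $\bm\omega$-constraint (which is what makes the singleton case collapse to Theorem~\ref{thm:f_dual_lifting}), so carrying it through as you propose reproduces the paper's derivation.
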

\begin{proof}
See \ref{sec:proof_cutting}. 
\qed 
\end{proof}
%--------------

Note that $|\mathcal{K}_{\delta}(\bm z)|$ is usually very small even when there are many scenarios. 
Therefore, Problem \eqref{prob:sub_dual_part} can be solved efficiently regardless of the number of scenarios. 

Let $\bm \omega^\star_{\delta}(\bm z)$ be the optimal solution of $\bm \omega$ for Problem \eqref{prob:sub_dual_part} with $\mathcal{K}=\mathcal{K}_{\delta}(\bm z)$. 
We then define
\begin{equation}\label{eq:subgradient2}
    \bm g_{\delta}(\bm z) := -\frac{\gamma}{2} \bm \omega^\star_{\delta}(\bm z)\circ \bm \omega^\star_{\delta}(\bm z).
\end{equation}
Note that when $\delta > 0$, $f_\delta(\bm z)$ and $\bm g_\delta(\bm z)$ are not exactly the same as $f(\bm z)$ and $\bm g(\bm z)$, respectively.
However, the following theorem verifies that $f_\delta(\bm z)$ and $\bm g_\delta(\bm z)$ still provide a linear underestimator of $f(\bm z)$.

\begin{theorem}
Suppose that Problem~\eqref{prob:sub_primal_part} is feasible with $(\bm z,\mathcal{K}) = (\hat{\bm z},\mathcal{K}_{\delta}(\hat{\bm z}))$. 
Then, it holds that for all $\bm z \in [0,1]^N$,
\begin{equation}\label{eq:thm5}
    f(\bm z) \geq f_{\delta}(\hat{\bm z}) + \bm g_{\delta}(\hat{\bm z})^\top(\bm z- \hat{\bm z}).
\end{equation}
\end{theorem}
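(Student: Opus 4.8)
The plan is to reduce the claim to the convexity/subgradient machinery already established in Lemmas~\ref{lem:convex} and~\ref{lem:subgradient}, but applied to the \emph{restricted} lower-level problem~\eqref{prob:sub_primal_part} with the family fixed at $\mathcal{K} := \mathcal{K}_{\delta}(\hat{\bm z})$, rather than to the full problem. Throughout, I regard $f_{\mathcal{K}}(\bm z)$ as a function of $\bm z \in [0,1]^N$ with $\mathcal{K}$ held fixed, and I would first record three ingredients. (i) \textbf{Relaxation.} Since $\mathcal{K}$ selects only some of the cut constraints~\eqref{eq:sub3_constr_cvar}, Problem~\eqref{prob:sub_primal_part} is a relaxation of Problem~\eqref{prob:sub3}, whose optimal value is $f(\bm z)$; hence $f_{\mathcal{K}}(\bm z) \le f(\bm z)$ for every $\bm z \in [0,1]^N$. (ii) \textbf{Convexity and subgradient.} Because the dual objective~\eqref{prob:sub_dual_part_obj} is affine in $\bm z$ for each fixed $(\bm\alpha,\bm\zeta,\lambda,\bm\omega)$, the same reasoning as in Lemmas~\ref{lem:convex} and~\ref{lem:subgradient} (via Theorem~\ref{thm:f_dual_cutting} together with Propositions~1.2.4 and~8.1.1 in~\cite{bertsekas2003convex}) shows that $f_{\mathcal{K}}$ is convex on $[0,1]^N$ and that $\bm g_{\delta}(\hat{\bm z}) = -\tfrac{\gamma}{2}\,\bm\omega^\star_{\delta}(\hat{\bm z})\circ\bm\omega^\star_{\delta}(\hat{\bm z}) \in \partial f_{\mathcal{K}}(\hat{\bm z})$. (iii) \textbf{Termination gap.} The value $f_{\delta}(\hat{\bm z})$ is attained by the solution of the final relaxed problem over $\mathcal{G}_T(\hat{\bm z})$, whose feasible region enforces only the cuts indexed by $\mathcal{S},\mathcal{J}_1,\dots,\mathcal{J}_{T-1}$, a subfamily of $\mathcal{K}=\{\mathcal{S},\mathcal{J}_1,\dots,\mathcal{J}_T\}$; since dropping constraints cannot increase the minimum, $f_{\delta}(\hat{\bm z}) \le f_{\mathcal{K}}(\hat{\bm z})$.

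With these in hand the argument is a short chain. By the subgradient inequality applied to the convex function $f_{\mathcal{K}}$ at $\hat{\bm z}$, for every $\bm z \in [0,1]^N$,
\[f_{\mathcal{K}}(\bm z) \ge f_{\mathcal{K}}(\hat{\bm z}) + \bm g_{\delta}(\hat{\bm z})^\top(\bm z - \hat{\bm z}).\]
Combining this with the relaxation bound from (i) and the termination gap from (iii) yields
\[f(\bm z) \ge f_{\mathcal{K}}(\bm z) \ge f_{\mathcal{K}}(\hat{\bm z}) + \bm g_{\delta}(\hat{\bm z})^\top(\bm z-\hat{\bm z}) \ge f_{\delta}(\hat{\bm z}) + \bm g_{\delta}(\hat{\bm z})^\top(\bm z-\hat{\bm z}),\]
which is exactly~\eqref{eq:thm5}. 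If $f(\bm z)=+\infty$ the bound is trivial; otherwise finiteness of $f_{\mathcal{K}}(\bm z)$ is inherited from $f_{\mathcal{K}}(\bm z)\le f(\bm z)$, so the subgradient step is legitimate.

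The delicate point, and the step I would be most careful about, is the bookkeeping in~(iii): the constant term $f_{\delta}(\hat{\bm z})$ comes from the relaxed region $\mathcal{G}_T$ that does \emph{not} contain the last cut $\mathcal{J}_T$, whereas the slope $\bm g_{\delta}(\hat{\bm z})$ is read off from the dual~\eqref{prob:sub_dual_part} of the \emph{augmented} family $\mathcal{K}_{\delta}(\hat{\bm z})$ that does, as specified by~\eqref{eq:subgradient2}. The two objects therefore refer to slightly different relaxations, and one must check that they combine in the correct direction. This is precisely why the monotonicity inequality $f_{\delta}(\hat{\bm z}) \le f_{\mathcal{K}}(\hat{\bm z})$ is needed: it lets me weaken the constant in the subgradient bound from $f_{\mathcal{K}}(\hat{\bm z})$ down to $f_{\delta}(\hat{\bm z})$ while keeping the inequality valid. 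The remaining facts, that the relaxation weakens the minimum and that the envelope-type subgradient formula carries over verbatim to the restricted dual, are routine once the correct family $\mathcal{K}$ is fixed, so I expect no further difficulty there.
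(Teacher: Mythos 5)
Your proof is correct and follows essentially the same route as the paper: bound $f$ below by the relaxation $f_{\mathcal{K}}$, apply the subgradient inequality to $f_{\mathcal{K}}$ at $\hat{\bm z}$ via Theorem~\ref{thm:f_dual_cutting}, and then replace $f_{\mathcal{K}}(\hat{\bm z})$ by $f_{\delta}(\hat{\bm z})$. Your handling of the last step is in fact more careful than the paper's, which asserts the equality $f_{\delta}(\hat{\bm z}) = f_{\mathcal{K}}(\hat{\bm z})$ even though for $\delta>0$ the final cut $\mathcal{J}_T$ belongs to $\mathcal{K}_{\delta}(\hat{\bm z})$ but not to $\mathcal{G}_T(\hat{\bm z})$, so only the inequality $f_{\delta}(\hat{\bm z}) \le f_{\mathcal{K}}(\hat{\bm z})$ that you prove is guaranteed --- and that is all the argument needs.
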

\begin{proof}
Because $\mathcal{K}=\mathcal{K}_{\delta}(\hat{\bm z})$, it follows from Lemma \ref{lem:subgradient} that $\bm g_{\delta}(\hat{\bm z})$ is a subgradient of $f_{\mathcal{K}}(\bm z)$ at $\bm z = \hat{\bm z}$.
It then holds that for all $\bm z \in [0,1]^N$,
\begin{equation}\label{proof_eq2}
    f(\bm z) \geq f_{\mathcal{K}}(\bm z) 
    \geq f_{\mathcal{K}}(\hat{\bm z}) + \bm g_{\delta}(\hat{\bm z})^\top (\bm z-\hat{\bm z}).
\end{equation}
This proof is completed because $f_{\delta}(\hat{\bm z}) = f_{\mathcal{K}}(\hat{\bm z})$ with $\mathcal{K}=\mathcal{K}_{\delta}(\hat{\bm z})$.
\qed 
\end{proof}
 
\subsection{Bilevel cutting-plane algorithm}
We are now ready to describe our bilevel cutting-plane algorithm for solving the upper-level problem~\eqref{prob:master_prob}. 
We use the lower-level cutting-plane algorithm (Algorithm \ref{alg:kunzi-bay_cpa}) to accelerate the cut generation at Step 2 of Algorithm \ref{alg:upper_level_cpa}. 

Suppose that $(\bm z_t, \theta_t)$ is an optimal solution to the relaxed problem \eqref{prob:master_relax} at the $t$th iteration~$(t\geq 1)$. 
We then solve the primal lower-level problem \eqref{prob:sub3} with $\bm z=\bm z_t$ by means of the lower-level cutting-plane algorithm (Algorithm \ref{alg:kunzi-bay_cpa}), which provides $f_\delta(\bm z_t), \hat{f}_\delta(\bm z_t)$, and $\mathcal{K}_\delta(\bm z_t)$.
We next solve the reduced version~\eqref{prob:sub_dual_part} of the dual lower-level problem with $(\bm z, \mathcal{K}) = (\bm z_t, \mathcal{K}_{\delta}(\bm z_t))$, thereby yielding the optimal solution $\bm \omega_\delta^\star(\bm z_t)$. 
After that, we calculate $\bm g_\delta(\bm z_t)$ as in Eq.~\eqref{eq:subgradient2} and update the feasible region as follows:
\begin{equation}\label{eq:cut_subgrad2}
    \mathcal{F}_{t+1} \leftarrow \mathcal{F}_t \cap \{(\bm z,\theta)\in \mathcal{Z}_N^k\times \mathbb{R}\mid \theta \geq f_{\delta}(\bm z_t) +  \bm g_{\delta}(\bm z_t)^\top (\bm z - \bm z_t)\}.
\end{equation}

Our bilevel cutting-plane algorithm is summarized by Algorithm \ref{alg:bcpa}.
Note also that at Step 2 (Algorithm \ref{alg:bcpa}), portfolio $\hat{\bm x}$ is generated by Algorithm \ref{alg:kunzi-bay_cpa} at each iteration.

 \begin{algorithm}[ht]
 \caption{Bilevel cutting-plane algorithm for solving Problem~\eqref{prob:master_prob}} 
 \label{alg:bcpa}
 \begin{algorithmic}[0]
 \normalsize
 \STATE \begin{description}
 \item[\textbf{Step 0 \textsf{(Initialization)}}] Let $\varepsilon \geq 0$ and $\delta\geq 0$ be tolerances for optimality. 
 Define the feasible region $\mathcal{F}_1$ as in Eq.~\eqref{eq:initial_relax_region}.
 Set $t\leftarrow 1$ and $\text{UB}_0 \leftarrow \infty$.
\item[\textbf{Step 1 \textsf{(Relaxed Problem)}}] Solve Problem~\eqref{prob:master_relax}. 
Let $(\bm z_t, \theta_t)$ be an optimal solution, and set $\text{LB}_t \leftarrow \theta_t$. 
\item[\textbf{Step 2 \textsf{(Cut Generation)}}] Solve Problem~\eqref{prob:sub3} with $\bm z = \bm z_t$ by means of Algorithm \ref{alg:kunzi-bay_cpa} to calculate $f_\delta(\bm z_t), \hat{f}_\delta(\bm z_t),$ and $\mathcal{K}_{\delta}(\bm z_t)$. 
\renewcommand{\labelenumi}{(\alph{enumi})}
\renewcommand{\labelenumii}{\roman{enumii}.}
\begin{enumerate}
    \setcounter{enumi}{0}
    \item If Problem~\eqref{prob:sub3} is infeasible, set $\text{UB}_{t} \leftarrow \text{UB}_{t-1}$ and update the feasible region as in Eq.~\eqref{eq:cut_infeas_sol}.
    \item If Problem~\eqref{prob:sub3} is feasible, perform the following procedures:
        \begin{enumerate}
            \setcounter{enumii}{0}
            \item Solve Problem~\eqref{prob:sub_dual_part} with $(\bm z, \mathcal{K}) = (\bm z_t, \mathcal{K}_{\delta}(\bm z_t))$ to calculate $\bm \omega_\delta^\star(\bm z_t)$. 
            \item Calculate $\bm g_\delta(\bm z_t)$ as in Eq.~\eqref{eq:subgradient2}. 
            \item Update the feasible region as in Eq.~\eqref{eq:cut_subgrad2}.
            \item If $\hat{f}_\delta(\bm z_t) < \text{UB}_{t-1}$, set $\text{UB}_{t} \leftarrow \hat{f}_\delta(\bm z_t)$ and $(\hat{\bm z}, \hat{\theta}) \leftarrow (\bm z_t, \theta_t)$; otherwise, set $\text{UB}_{t} \leftarrow \text{UB}_{t-1}$. 
        \end{enumerate}
\end{enumerate}
\item[\textbf{Step 3 \textsf{(Termination Condition)}}] If $\bm z_u = \bm z_t$ for some $u < t$ or $\text{UB}_t-\text{LB}_t\leq \varepsilon$, then terminate the algorithm with the $\max\{\delta, \varepsilon\}$-optimal solution $\hat{\bm z}$.
\item[\textbf{Step 4}] Set $t\leftarrow t+1$ and return to Step 1. 
\end{description}
 \end{algorithmic} 
 \end{algorithm}
 
\begin{remark}
Several algorithms (e.g., primal-dual interior-point methods~\cite{wright1997primal} and alternating direction methods of multipliers~\cite{boyd2011distributed}) solve primal and dual problems simultaneously. 
By applying such algorithms in Step~1 of Algorithm \ref{alg:kunzi-bay_cpa}, we can skip solving the dual lower-level problem~\eqref{prob:sub_dual_part} at Step~2 of Algorithm~\ref{alg:bcpa}. 
\end{remark}

\begin{remark}
 Note that Step 1 of Algorithm \ref{alg:bcpa} solves the MIO problem~\eqref{prob:master_relax} at each iteration; this amounts to a ``multi-tree'' implementation, where a branch-and-bound algorithm is repeatedly executed from scratch. 
 To improve computational efficiency, we can use the function of \texttt{lazy constraint callback}, which is offered by modern optimization software (e.g., CPLEX or Gurobi). 
 This function enables a ``single-tree'' implementation~\cite{Quesada1992}; that is, cutting planes~\eqref{eq:cut_subgrad2} are dynamically generated during the process of the branch-and-bound algorithm. 
\end{remark}
 
To prove the convergence properties of Algorithm \ref{alg:bcpa}, we show the following lemma. 
\begin{lemma}\label{lem:cycle}
Suppose that $\{(\bm z_t,\theta_t) \mid t=1,2,\ldots, T \}$ is a sequence of solutions generated by Algorithm \ref{alg:bcpa}. 
If there exists $u < T$ such that $\bm z_u = \bm z_T$, then $\bm z_{T}$ is a $\delta$-optimal solution to Problem \eqref{prob:master_prob}, meaning that
\begin{equation*}
    f^\star \leq f(\bm z_T) \leq f^\star + \delta.
\end{equation*}
\end{lemma}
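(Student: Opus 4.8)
The plan is to establish the two-sided bound $f^\star \le f(\bm z_T) \le f^\star + \delta$ separately. The left inequality is immediate from the definition of $f^\star$ as the optimal value of Problem~\eqref{prob:master_prob}, since $\bm z_T \in \mathcal{Z}_N^k$ is feasible. All the work therefore goes into the right inequality, which I would obtain by sandwiching the lower bound $\text{LB}_T = \theta_T$ between $f(\bm z_T) - \delta$ from below and $f^\star$ from above.

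For the upper estimate $\theta_T \le f^\star$, I would argue that the optimizer $\bm z^\star$ of Problem~\eqref{prob:master_prob}, with $f(\bm z^\star) = f^\star$, together with $\theta = f^\star$ remains feasible for the relaxed region $\mathcal{F}_T$. Indeed, every subgradient cut of the form \eqref{eq:cut_subgrad2} is, by the linear-underestimator inequality \eqref{eq:thm5}, satisfied by the point $(\bm z^\star, f^\star)$; every no-good cut \eqref{eq:cut_infeas_sol} only excludes points $\bm z$ whose lower-level problem is infeasible (hence $f(\bm z) = +\infty \ne f^\star$), so $\bm z^\star$ survives all of them; and the initial constraint $\theta \ge \theta_{\text{LB}}$ holds because $\theta_{\text{LB}}$ is a valid lower bound on $f^\star$. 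Thus $(\bm z^\star, f^\star) \in \mathcal{F}_T$, and since $(\bm z_T, \theta_T)$ minimizes $\theta$ over $\mathcal{F}_T$, it follows that $\theta_T \le f^\star$.

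For the lower estimate on $\theta_T$, I would exploit the cycle $\bm z_u = \bm z_T$. First I would confirm that the cut generated at iteration $u$ was the subgradient cut \eqref{eq:cut_subgrad2}, not the no-good cut \eqref{eq:cut_infeas_sol}: because the feasible regions shrink monotonically, $\mathcal{F}_T \subseteq \mathcal{F}_{u+1}$, and $(\bm z_T, \theta_T) = (\bm z_u, \theta_T) \in \mathcal{F}_T$; had a no-good cut been added at iteration $u$, the point $\bm z_u$ would have been permanently excluded from every subsequent region, contradicting its reappearance at iteration $T$. In particular the lower-level problem at $\bm z_u$ was feasible, so $f(\bm z_T) = f(\bm z_u) < \infty$. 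Hence the subgradient cut $\theta \ge f_\delta(\bm z_u) + \bm g_\delta(\bm z_u)^\top(\bm z - \bm z_u)$ belongs to $\mathcal{F}_{u+1} \supseteq \mathcal{F}_T$, and evaluating it at $(\bm z_T, \theta_T)$ with $\bm z_T = \bm z_u$ kills the linear term, yielding $\theta_T \ge f_\delta(\bm z_u) = f_\delta(\bm z_T)$. Combining this with the bound $f_\delta(\bm z_T) \ge f(\bm z_T) - \delta$ from \eqref{eq:ep_opt} and with $\theta_T \le f^\star$ gives $f^\star \ge \theta_T \ge f(\bm z_T) - \delta$, i.e. $f(\bm z_T) \le f^\star + \delta$, which completes the proof.

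I expect the main obstacle to be the bookkeeping around which type of cut was produced at the repeated iteration $u$: the argument that a no-good cut would have permanently excluded $\bm z_u$ rests on the monotone nesting $\mathcal{F}_{t+1} \subseteq \mathcal{F}_t$ and must be stated carefully, since it is what guarantees that a usable subgradient cut is actually present in $\mathcal{F}_T$. The remaining steps are a routine chaining of the already-established bounds \eqref{eq:thm5} and \eqref{eq:ep_opt}.
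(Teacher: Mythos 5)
Your proof is correct and follows essentially the same route as the paper's: evaluate the subgradient cut from iteration $u$ at $\bm z_T=\bm z_u$ to get $\theta_T\ge f_\delta(\bm z_T)$, combine with $f(\bm z_T)-\delta\le f_\delta(\bm z_T)$ from \eqref{eq:ep_opt} and with $\theta_T\le f^\star$, and chain the inequalities. The only difference is that you explicitly justify two steps the paper leaves implicit — that $(\bm z^\star,f^\star)$ survives all cuts so $\theta_T\le f^\star$, and that the cut added at iteration $u$ must have been a subgradient cut rather than a no-good cut — which is careful bookkeeping rather than a different argument.
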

\begin{proof}
Note that $(\bm z_T,\theta_T)$ is contained in the feasible region~\eqref{eq:cut_subgrad2} at $t = u$.
Because $\bm z_u=\bm z_T$, it follows that
\[
\theta_T \geq f_{\delta}(\bm z_u) + \bm g_{\delta}(\bm z_u)^\top (\bm z_T - \bm z_u) = f_{\delta}(\bm z_u) = f_{\delta}(\bm z_{T}).
\]
From Eqs.~\eqref{eq:upper_optimality}~and~\eqref{eq:ep_opt}, we have 
\[
f^\star \leq f(\bm z_T), \quad f(\bm z_T) - \delta \leq f_{\delta}(\bm z_T). 
\]
Because $\theta_T \leq f^\star$, it follows that 
\begin{equation*}
    f^\star \leq f(\bm z_T) \leq f_{\delta}(\bm z_T) + \delta \leq \theta_T + \delta \leq f^\star + \delta,
\end{equation*}
which completes the proof. 
\qed 
\end{proof}

From Lemma \ref{lem:cycle}, we can establish the convergence properties of Algorithm \ref{alg:bcpa}.
\begin{theorem}
Algorithm \ref{alg:bcpa} terminates in a finite number of iterations and outputs a $max\{\delta, \varepsilon\}$-optimal solution.
\end{theorem}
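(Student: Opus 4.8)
The plan is to split the statement into two independent claims---finite termination and the $\max\{\delta,\varepsilon\}$-optimality of the output---and to handle the two termination triggers in Step~3 of Algorithm~\ref{alg:bcpa} (a repeated iterate $\bm z_u = \bm z_t$, or the bound gap $\text{UB}_t - \text{LB}_t \le \varepsilon$) separately. The finiteness part will rely on the finiteness of $\mathcal{Z}_N^k$; the optimality part will combine Lemma~\ref{lem:cycle} with a standard lower-bound/upper-bound accounting.

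\textbf{Finite termination.} First I would show that the iterates $\bm z_1,\bm z_2,\ldots,\bm z_{T-1}$ produced before termination are pairwise distinct. If the lower-level problem is infeasible at some iteration, the Hamming-distance cut~\eqref{eq:cut_infeas_sol} permanently removes that $\bm z_t$ from every later relaxed region, so it cannot reappear; and if a feasible iterate were re-selected at a later iteration $t'$, the clause ``$\bm z_u = \bm z_{t'}$ for some $u<t'$'' of Step~3 would already have halted the algorithm at $t'$. Hence the $\bm z$-values seen before termination are injective, and since $\mathcal{Z}_N^k$ is finite, $T-1 \le |\mathcal{Z}_N^k|$, so the algorithm stops after at most $|\mathcal{Z}_N^k|+1$ iterations.

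\textbf{Optimality.} Under cycle termination ($\bm z_u=\bm z_T$, $u<T$) I would invoke Lemma~\ref{lem:cycle} directly to obtain $f^\star \le f(\bm z_T) \le f^\star+\delta \le f^\star+\max\{\delta,\varepsilon\}$, noting that the incumbent $\hat{\bm z}$ recorded in Step~2(b)iv satisfies $\text{UB}_T \le \hat{f}_\delta(\bm z_T)$ and is therefore no worse in the computed upper bound. Under the gap condition I would verify that $\text{LB}_T$ and $\text{UB}_T$ genuinely bracket $f^\star$: every subgradient cut~\eqref{eq:cut_subgrad2} is a valid linear underestimator of $f$ by Eq.~\eqref{eq:thm5}, and every infeasibility cut~\eqref{eq:cut_infeas_sol} excludes only one infeasible point, so the optimal pair $(\bm z^\star,f^\star)$ survives in $\mathcal{F}_T$, giving $\text{LB}_T=\theta_T \le f^\star$. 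Meanwhile Eq.~\eqref{eq:ep_opt} yields $f(\hat{\bm z}) \le \hat{f}_\delta(\hat{\bm z}) = \text{UB}_T$. Chaining gives $f(\hat{\bm z}) \le \text{UB}_T \le \text{LB}_T + \varepsilon \le f^\star+\varepsilon \le f^\star+\max\{\delta,\varepsilon\}$, while feasibility of $\hat{\bm z}$ forces $f^\star \le f(\hat{\bm z})$, so $\hat{\bm z}$ is $\max\{\delta,\varepsilon\}$-optimal.

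\textbf{Main obstacle.} The combinatorial finiteness argument should be routine once distinctness of the iterates is pinned down. The delicate step is the bookkeeping in the gap case: one must confirm that $(\bm z^\star,f^\star)$ is never cut off by \emph{any} accumulated cut, so that $\text{LB}_T \le f^\star$, and must combine the two-sided inner bound~\eqref{eq:ep_opt} from Algorithm~\ref{alg:kunzi-bay_cpa} with the outer gap carefully so that the tolerances aggregate to $\max\{\delta,\varepsilon\}$ rather than a larger multiple. Reconciling which solution---$\bm z_T$ versus the incumbent $\hat{\bm z}$---carries the guarantee in the cycle case is the other point that will require attention.
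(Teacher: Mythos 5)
Your proof is correct and follows essentially the same route as the paper's: finiteness of $\mathcal{Z}_N^k$ forces termination, Lemma~\ref{lem:cycle} disposes of the cycle case, and the validity of the cuts together with Eq.~\eqref{eq:ep_opt} gives the $\varepsilon$-bracket in the gap case. If anything, your version is more careful than the paper's three-sentence argument—in particular, your attention to whether the guarantee attaches to $\bm z_T$ or to the incumbent $\hat{\bm z}$ (which can be resolved by the chain $f(\hat{\bm z}) \le \hat{f}_{\delta}(\hat{\bm z}) \le \hat{f}_{\delta}(\bm z_T) \le f_{\delta}(\bm z_T) + \delta \le \theta_T + \delta \le f^\star + \delta$) addresses a point the paper glosses over.
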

\begin{proof}
Because there are at most a finite number of solutions $\bm z\in \mathcal{Z}_N^k$, Algorithm \ref{alg:bcpa} terminates in a finite number of iterations with $\bm z_u = \bm z_t$ for some $u < t$. 
In this case, a $\delta$-optimal solution is found according to Lemma \ref{lem:cycle}. 
If $\text{UB}_t-\text{LB}_t\leq \varepsilon$ is fulfilled first, Algorithm \ref{alg:bcpa} terminates with an $\varepsilon$-optimal solution. 
\qed 
\end{proof}

\section{Numerical Experiments}\label{sec:experiments}
In this section, we report numerical results to evaluate the efficiency of our algorithms for solving cardinality-constrained mean-CVaR portfolio optimization problems.  

\subsection{Problem instances}
Table \ref{tab:datalist} lists the datasets used in our experiments, where $N$ is the number of assets. 
From the data library on the website of Kenneth R.\ French~\cite{Kenneth}, we downloaded four historical datasets (i.e., \texttt{ind38}, \texttt{ind49}, \texttt{sbm25}, and \texttt{sbm100}) of US stock returns.  
We used monthly data from January 2010 to December 2019 to compute the mean vector $\bm \mu$ and covariance matrix $\bm \Sigma$ of asset returns.
From the OR-Library~\cite{Beasley1990}, we downloaded three datasets (i.e., \texttt{port1}, \texttt{port2}, and \texttt{port5}) of the mean vector $\bm \mu$ and covariance matrix $\bm \Sigma$ of asset returns, which were multiplied by 100 to be consistent with the other datasets.

\begin{table}[ht]
\normalsize
    \centering
    \caption{Dataset description}
    \label{tab:datalist}
    \begin{tabular}{lrl}
        \toprule
        Abbr. &$N$ &Original dataset  \\ \hline
        \texttt{ind38} &38 &38 Industry Portfolios \cite{Kenneth}\\
        \texttt{ind49} &49 &49 Industry Portfolios \cite{Kenneth}\\
        \texttt{sbm25} &25 & 
25 Portfolios Formed on Size and Book-to-Market \cite{Kenneth}\\
        \texttt{sbm100} &100 & 
100 Portfolios Formed on Size and Book-to-Market \cite{Kenneth}\\
        \texttt{port1} &31 &port1 (Portfolio optimization: Single period) \cite{Beasley1990}\\
        \texttt{port2} &89 &port2 (Portfolio optimization: Single period) \cite{Beasley1990}\\
        \texttt{port5} &225 &port5 (Portfolio optimization: Single period) \cite{Beasley1990}\\
        \bottomrule
    \end{tabular}
\end{table}

For each dataset, we randomly generated $S$ scenarios of asset returns $\{\bm r^{(s)} \mid s \in \mathcal{S}\}$ from $\mathrm{N}(\bm \mu, \bm \Sigma)$, a normal distribution with the parameters~$(\bm \mu, \bm \Sigma)$. 
We set the occurrence probability as $p_s = 1/S$ for all $s\in \mathcal{S}$.

The required return level in Eq.~\eqref{eq:con_exp_return} was set as
$\bar{\mu}= 0.3 \mu_{\text{min}} + 0.7 \mu_{\text{max}}$, where $\mu_{\text{max}}$ and $\mu_{\text{min}}$ were the average returns of the top- and bottom-$k$ assets, respectively. 
The probability level of CVaR was set as $\beta=0.9$.

\subsection{Methods for comparison}
A standard MIO formulation of Problem~\eqref{prob:org} uses the big-$M$ method, which replaces the logical implication~\eqref{constr:logic_z} with 
\begin{equation}\label{eq:bigM}
    0 \leq x_n \leq z_n\quad (\forall n\in \mathcal{N}),
\end{equation}
which is valid because of Eq.~\eqref{eq:weight}. 

Another state-of-the-art MIO formulation uses the perspective reformulation~\cite{Gunluk2010,Gunluk2012}:
\begin{subequations}\label{prob:persp}
\begin{alignat}{3}
    &\minimize_{a, v, \bm x, \bm y, \bm z} &&\quad \frac{1}{2\gamma }\sum_{n \in \mathcal{N}} y^2_n + a + v \label{eq:persp_obj}\\
    &\subjectto && \quad v \geq  \frac{1}{1-\beta}\sum_{s\in \mathcal{S}}p_s\left[-(\bm r^{(s)})^\top \bm x-a\right]_+, \label{eq:persp_constr_cvar}\\
                &&&\quad x_n \leq y_n z_n, \quad y_n \geq 0 \quad (\forall n\in \mathcal{N}), \label{constr:persp}\\
               &&&\quad  \bm x \in \mathcal{X}, \quad \bm z \in \mathcal{Z}_N^k, %\label{constr:xz}
\end{alignat}
\end{subequations}
where $\bm y :=(y_1,y_2,\ldots,y_N)^{\top}$ is a vector of auxiliary decision variables.  

We compare the computational performances of the following methods:
\begin{description}
    \item[\textbf{BigM}] solves Problem~\eqref{prob:org} after replacing Eq.~\eqref{constr:logic_z} with Eq.~\eqref{eq:bigM};
    \item[\textbf{Persp}] solves Problem~\eqref{prob:persp} as a mixed-integer second-order cone optimization problem;
        \begin{description}
        \item[\textbf{Lift}] replaces Eqs.~\eqref{eq:org_constr_cvar}~and~\eqref{eq:persp_constr_cvar} with the lifting representation~\eqref{eq:sub2_constr_cvar}--\eqref{eq:sub2_constr_lift2};
        \item[\textbf{Cut}] applies the cutting-plane algorithm~\cite{Ahmed2006,Haneveld2006,Kunzi-Bay2006,Takano2014} to Problems \eqref{prob:org} and \eqref{prob:persp};
        %based on the cutting-plane representation~\eqref{eq:sub3_constr_cvar};
        \end{description}
    \item[\textbf{CP}] solves Problem~\eqref{prob:master_prob} by means of the upper-level cutting-plane algorithm (Algorithm \ref{alg:upper_level_cpa});
    \item[\textbf{BCP}] solves Problem \eqref{prob:master_prob} by means of the bilevel cutting-plane algorithm (Algorithm \ref{alg:bcpa});
    \item[\textbf{BCPc}] solves Problem \eqref{prob:master_prob} by means of the bilevel cutting-plane algorithm (Algorithm \ref{alg:bcpa}), where the callback function is employed in implementation.
\end{description}
All experiments were performed on a Windows 10 PC with an Intel Core i7-4790 CPU (3.6.0GHz) and 16 GB of memory.
All methods were implemented in Python 3.7 with Gurobi Optimizer 8.1.1. 
We employed the primal-dual interior-point method offered by Gurobi at Step~1 of Algorithm \ref{alg:kunzi-bay_cpa}. 
We also tested the performance of an alternating direction method of multipliers, but it made Algorithm \ref{alg:kunzi-bay_cpa} slower because of numerical inaccuracies.

We set $\varepsilon =\delta = 10^{-5}$ for tolerances for optimality in the cutting-plane algorithms. 
Except for BCP, we used \texttt{lazy constraint callback} for adding cutting planes during the branch-and-bound procedure.
The computation of each method was terminated if it did not finish by itself within 3600~s.
In these cases, the results obtained within 3600~s were taken as the final outcome.

\subsection{Evaluation metrics}
The row labels used in the tables of experimental results are defined as follows:
\begin{description}
\item[\textbf{Time}] computation time in seconds;
\item[\textbf{Obj}] objective value of the obtained best feasible solution;
\item[\textbf{Gap$(\%)$}] relative gap between lower and upper bounds on the optimal objective value;
\item[\textbf{\#Nodes}] number of nodes explored in the branch-and-bound algorithm; 
\item[\textbf{\#Cuts}] number of cutting planes generated.
\end{description}
Note that the best values of Time are indicated in bold for each instance, and those of Obj and Gap$(\%)$ are also indicated in bold for only the \texttt{port2} and \texttt{port5} datasets.

\subsection{Results for various numbers of scenarios}

First, we evaluate the computational performance of each method for various numbers of scenarios.
Here, we set the parameters $k=10$ for the cardinality constraint and $\gamma = 10/\sqrt{N}$ for the $\ell_2$-regularization term. 

Tables \ref{tbl:results_scenario_famafrench} and \ref{tbl:results_scenario_port} give the numerical results for each dataset with the number of scenarios $S \in \{10^3, 10^4, 10^5\}$.
Let us focus on our cutting-plane algorithms (i.e, CP, BCP, and BCPc). 
BCP and BCPc solved most of the problem instances faster than did CP, especially for large $S$.
In the case of the \texttt{ind49} dataset (Table \ref{tbl:results_scenario_famafrench}) for example, although CP was the fastest among the three methods for $S=10^3$, BCP and BCPc were much faster than CP for $S \geq 10^4$. 
These results suggest the effectiveness of the lower-level cutting-plane algorithm, which is used by BCP and BCPc to solve the lower-level problem efficiently regardless of the number of scenarios. 

BCP was faster than BCPc except in the case of the $\texttt{port5}$ dataset with $S=10^3$ (Table \ref{tbl:results_scenario_port}), but the differences in computation time between both the methods were small. 
For the \texttt{port2} dataset (Table \ref{tbl:results_scenario_port}), BCP and BCPc failed to complete the computations within 3600~s even when $S=10^3$; however, BCPc found solutions of better quality than did BCP. 
BCP provides at most one feasible solution at each iteration $t$, whereas BCPc explores feasible solutions more frequently through the callback procedure.  
For this reason, BCPc is capable of yielding good solutions even if the computation is terminated because of the time limit.

Next, we compare our cutting-plane algorithms with the MIO formulations (i.e., BigM and Persp). 
For medium-sized problem instances with $N< 100$ and $S \leq 10^4$, there was no clear inferior-to-superior relationship between those methods. 
When $S=10^5$, on the other hand, our algorithms BCP and BCPc often outperformed the other methods. 
Indeed, for the \texttt{sbm100} dataset with $S=10^5$ (Table \ref{tbl:results_scenario_famafrench}), BCP was much faster than the four methods related to BigM and Persp.
Moreover, for the \texttt{port5} dataset with $S=10^5$ (Table \ref{tbl:results_scenario_port}), only BCP and BCPc finished solving the problem within 3600~s, which is a remarkable result.
In the case of the \texttt{port5} dataset with $S=10^3$ (Table \ref{tbl:results_scenario_port}), Persp+Cut returned an incorrect optimal objective value because of numerical instability.

According to the convergence properties of the scenario-based approximation, a huge number of scenarios are required for calculating CVaR accurately~\cite{Takeda2009}. 
Even with only 15 investable assets, it was necessary to have at least 5000 scenarios to ensure the stability of the CVaR optimization model~\cite{Kaut2007}. 
These facts support the practicality of our bilevel cutting-plane algorithm, which has the potential to deal with many scenarios as shown in Tables \ref{tbl:results_scenario_famafrench} and \ref{tbl:results_scenario_port}.

\begin{table}[ht]
\scriptsize \renewcommand{\arraystretch}{0.9}
    \centering
    \caption{Numerical results for the datasets~\cite{Kenneth} with $(\gamma,k)=(10/\sqrt{N},10)$}
    \label{tbl:results_scenario_famafrench}
    \begin{tabular}{crrrrrrrrrr}
    \toprule
\multirow{2}{*}{Data} 	&\multirow{2}{*}{$N$}	&\multirow{2}{*}{$S$}	 &&\multicolumn{2}{c}{BigM}	 &\multicolumn{2}{c}{Persp}	 &\multirow{2}{*}{CP}	 &\multirow{2}{*}{BCP}	  &\multirow{2}{*}{BCPc}\\ \cmidrule(lr){5-6} \cmidrule(lr){7-8} 
        &       &        &&\multicolumn{1}{c}{Lift}&\multicolumn{1}{c}{Cut} &\multicolumn{1}{c}{Lift}&\multicolumn{1}{c}{Cut}\\ \midrule
\texttt{ind38}	 &38	 &$10^3$	 &Time	&1.3	&3.4	&1.7	&2.6	&2.4	&\textbf{0.9}	&1.4\\ 
	 &	 & 	 &Obj	&3.989	&3.989	&3.989	&3.989	&3.989	&3.989	&3.989\\ 
	 &	 & 	 &Gap(\%)	&0.00	&0.00	&0.00	&0.00	&0.00	&0.00	&0.00\\ 
	 &	 & 	 &\#Nodes	&1	&2693	&1	&3059	&19	& \multicolumn{1}{c}{---}	&0\\ 
	 &	 & 	 &\#Cuts	& \multicolumn{1}{c}{---}	&505	& \multicolumn{1}{c}{---}	&274	&20	&1	&7\\ \cline{3-11} 
 	 &	 &$10^4$	 &Time	&14.0	&15.9	&23.5	&6.7	&23.1	&\textbf{3.2}	&5.4\\ 
	 &	 & 	 &Obj	&3.820	&3.820	&3.820	&3.820	&3.820	&3.820	&3.820\\ 
	 &	 & 	 &Gap(\%)	&0.00	&0.00	&0.00	&0.00	&0.00	&0.00	&0.00\\ 
	 &	 & 	 &\#Nodes	&1	&3362	&1	&1597	&11	& \multicolumn{1}{c}{---}	&0\\ 
	 &	 & 	 &\#Cuts	& \multicolumn{1}{c}{---}	&479	& \multicolumn{1}{c}{---}	&178	&20	&1	&7\\ \cline{3-11} 
 	 &	 &$10^5$	 &Time	&493.9	&98.4	&3611.5	&57.8	&254.3	&\textbf{25.8}	&47.6\\ 
	 &	 & 	 &Obj	&3.860	&3.860	&3.860	&3.860	&3.860	&3.860	&3.860\\ 
	 &	 & 	 &Gap(\%)	&0.00	&0.00	&0.00	&0.00	&0.00	&0.00	&0.00\\ 
	 &	 & 	 &\#Nodes	&1	&2327	&1	&1139	&21	& \multicolumn{1}{c}{---}	&0\\ 
	 &	 & 	 &\#Cuts	& \multicolumn{1}{c}{---}	&338	& \multicolumn{1}{c}{---}	&193	&22	&1	&7\\ \hline 
 \texttt{ind49}	 &49	 &$10^3$	 &Time	&\textbf{1.8}	&4.8	&2.3	&11.8	&6.1	&7.3	&8.6\\ 
	 &	 & 	 &Obj	&3.095	&3.095	&3.095	&3.095	&3.095	&3.095	&3.095\\ 
	 &	 & 	 &Gap(\%)	&0.00	&0.00	&0.00	&0.00	&0.00	&0.00	&0.00\\ 
	 &	 & 	 &\#Nodes	&16	&3792	&23	&4506	&211	& \multicolumn{1}{c}{---}	&33\\ 
	 &	 & 	 &\#Cuts	& \multicolumn{1}{c}{---}	&573	& \multicolumn{1}{c}{---}	&1101	&76	&40	&45\\ \cline{3-11} 
 	 &	 &$10^4$	 &Time	&18.0	&21.2	&26.0	&26.2	&47.1	&\textbf{5.2}	&8.3\\ 
	 &	 & 	 &Obj	&3.343	&3.343	&3.343	&3.343	&3.343	&3.343	&3.343\\ 
	 &	 & 	 &Gap(\%)	&0.00	&0.00	&0.00	&0.00	&0.00	&0.00	&0.00\\ 
	 &	 & 	 &\#Nodes	&1	&3357	&1	&3815	&89	& \multicolumn{1}{c}{---}	&0\\ 
	 &	 & 	 &\#Cuts	& \multicolumn{1}{c}{---}	&515	& \multicolumn{1}{c}{---}	&580	&47	&1	&7\\ \cline{3-11} 
 	 &	 &$10^5$	 &Time	&709.8	&125.0	&$>$3600	&220.0	&662.2	&\textbf{43.1}	&103.6\\ 
	 &	 & 	 &Obj	&3.379	&3.379	&3.380	&3.379	&3.379	&3.379	&3.379\\ 
	 &	 & 	 &Gap(\%)	&0.00	&0.00	&0.00	&0.00	&0.00	&0.00	&0.00\\ 
	 &	 & 	 &\#Nodes	&31	&4101	&$>$1	&3489	&250	& \multicolumn{1}{c}{---}	&0\\ 
	 &	 & 	 &\#Cuts	& \multicolumn{1}{c}{---}	&380	& \multicolumn{1}{c}{---}	&661	&68	&1	&12\\ \hline 
 \texttt{sbm25}	 &25	 &$10^3$	 &Time	&1.3	&\textbf{0.1}	&1.7	&0.2	&1.3	&0.2	&0.7\\ 
	 &	 & 	 &Obj	&5.017	&5.017	&5.017	&5.017	&5.017	&5.017	&5.017\\ 
	 &	 & 	 &Gap(\%)	&0.00	&0.00	&0.00	&0.00	&0.00	&0.00	&0.00\\ 
	 &	 & 	 &\#Nodes	&1	&0	&1	&129	&0	& \multicolumn{1}{c}{---}	&0\\ 
	 &	 & 	 &\#Cuts	& \multicolumn{1}{c}{---}	&17	& \multicolumn{1}{c}{---}	&14	&4	&1	&8\\ \cline{3-11} 
 	 &	 &$10^4$	 &Time	&11.8	&0.5	&21.5	&\textbf{0.5}	&11.3	&1.1	&2.7\\ 
	 &	 & 	 &Obj	&4.931	&4.931	&4.931	&4.931	&4.931	&4.931	&4.931\\ 
	 &	 & 	 &Gap(\%)	&0.00	&0.00	&0.00	&0.00	&0.00	&0.00	&0.00\\ 
	 &	 & 	 &\#Nodes	&1	&0	&1	&1	&0	& \multicolumn{1}{c}{---}	&0\\ 
	 &	 & 	 &\#Cuts	& \multicolumn{1}{c}{---}	&14	& \multicolumn{1}{c}{---}	&10	&4	&1	&7\\ \cline{3-11} 
 	 &	 &$10^5$	 &Time	&521.3	&3.8	&2126.2	&\textbf{3.5}	&110.3	&10.3	&23.1\\ 
	 &	 & 	 &Obj	&4.944	&4.944	&4.944	&4.944	&4.944	&4.944	&4.944\\ 
	 &	 & 	 &Gap(\%)	&0.00	&0.00	&0.00	&0.00	&0.00	&0.00	&0.00\\ 
	 &	 & 	 &\#Nodes	&1	&0	&1	&1	&0	& \multicolumn{1}{c}{---}	&0\\ 
	 &	 & 	 &\#Cuts	& \multicolumn{1}{c}{---}	&15	& \multicolumn{1}{c}{---}	&12	&4	&1	&7\\ \hline 
 \texttt{sbm100}	 &100	 &$10^3$	 &Time	&3.3	&1.1	&3.7	&4.2	&4.9	&\textbf{0.6}	&0.8\\ 
	 &	 & 	 &Obj	&4.337	&4.337	&4.337	&4.337	&4.337	&4.337	&4.337\\ 
	 &	 & 	 &Gap(\%)	&0.00	&0.00	&0.00	&0.00	&0.00	&0.00	&0.00\\ 
	 &	 & 	 &\#Nodes	&1	&396	&1	&4626	&14	& \multicolumn{1}{c}{---}	&0\\ 
	 &	 & 	 &\#Cuts	& \multicolumn{1}{c}{---}	&70	& \multicolumn{1}{c}{---}	&189	&21	&1	&7\\ \cline{3-11} 
 	 &	 &$10^4$	 &Time	&39.1	&8.1	&45.0	&9.3	&51.1	&\textbf{3.3}	&4.0\\ 
	 &	 & 	 &Obj	&4.397	&4.397	&4.397	&4.397	&4.397	&4.397	&4.397\\ 
	 &	 & 	 &Gap(\%)	&0.00	&0.00	&0.00	&0.00	&0.00	&0.00	&0.00\\ 
	 &	 & 	 &\#Nodes	&1	&1820	&1	&3274	&58	& \multicolumn{1}{c}{---}	&0\\ 
	 &	 & 	 &\#Cuts	& \multicolumn{1}{c}{---}	&103	& \multicolumn{1}{c}{---}	&107	&20	&1	&7\\ \cline{3-11} 
 	 &	 &$10^5$	 &Time	&1107.1	&65.1	&$>$3600	&84.6	&482.7	&\textbf{27.6}	&36.6\\ 
	 &	 & 	 &Obj	&4.364	&4.364	&4.364	&4.364	&4.364	&4.364	&4.364\\ 
	 &	 & 	 &Gap(\%)	&0.00	&0.00	&0.00	&0.00	&0.00	&0.00	&0.00\\ 
	 &	 & 	 &\#Nodes	&1	&412	&$>$1	&4676	&10	& \multicolumn{1}{c}{---}	&0\\ 
	 &	 & 	 &\#Cuts	& \multicolumn{1}{c}{---}	&120	& \multicolumn{1}{c}{---}	&152	&17	&1	&7\\  \bottomrule
	 \end{tabular}
\end{table}

\begin{table}[ht]
\scriptsize \renewcommand{\arraystretch}{0.9}
    \centering
    \caption{Numerical results for the datasets~\cite{Beasley1990} with $(\gamma,k)=(10/\sqrt{N},10)$}
    \label{tbl:results_scenario_port}
    \begin{threeparttable}
    \begin{tabular}{crrrrrrrrrr}
    \toprule
    \multirow{2}{*}{Data} 	&\multirow{2}{*}{$N$}	&\multirow{2}{*}{$S$}	 &&\multicolumn{2}{c}{BigM}	 &\multicolumn{2}{c}{Persp}	 &\multirow{2}{*}{CP}	 &\multirow{2}{*}{BCP}	  &\multirow{2}{*}{BCPc}\\ \cmidrule(lr){5-6} \cmidrule(lr){7-8} 
        &       &        &&\multicolumn{1}{c}{Lift}&\multicolumn{1}{c}{Cut} &\multicolumn{1}{c}{Lift}&\multicolumn{1}{c}{Cut}\\ \midrule
\texttt{port1}	 &31	 &$10^3$	 &Time	&1.1	&2.3	&1.5	&4.3	&3.0	&\textbf{0.8}	&1.7\\ 
	 &	 & 	 &Obj	&4.404	&4.404	&4.404	&4.404	&4.404	&4.404	&4.404\\ 
	 &	 & 	 &Gap(\%)	&0.00	&0.00	&0.00	&0.00	&0.00	&0.00	&0.00\\ 
	 &	 & 	 &\#Nodes	&1	&2301	&1	&2494	&58	& \multicolumn{1}{c}{---}	&0\\ 
	 &	 & 	 &\#Cuts	& \multicolumn{1}{c}{---}	&338	& \multicolumn{1}{c}{---}	&503	&36	&1	&7\\ \cline{3-11} 
 	 &	 &$10^4$	 &Time	&11.7	&11.4	&24.0	&17.7	&18.0	&\textbf{4.8}	&8.2\\ 
	 &	 & 	 &Obj	&4.374	&4.374	&4.374	&4.374	&4.374	&4.374	&4.374\\ 
	 &	 & 	 &Gap(\%)	&0.00	&0.00	&0.00	&0.00	&0.00	&0.00	&0.00\\ 
	 &	 & 	 &\#Nodes	&1	&850	&1	&3005	&46	& \multicolumn{1}{c}{---}	&0\\ 
	 &	 & 	 &\#Cuts	& \multicolumn{1}{c}{---}	&353	& \multicolumn{1}{c}{---}	&520	&17	&1	&8\\ \cline{3-11} 
 	 &	 &$10^5$	 &Time	&468.6	&163.3	&1933.2	&\textbf{15.8}	&265.8	&39.5	&64.6\\ 
	 &	 & 	 &Obj	&4.264	&4.264	&4.264	&4.264	&4.264	&4.264	&4.264\\ 
	 &	 & 	 &Gap(\%)	&0.00	&0.00	&0.00	&0.00	&0.00	&0.00	&0.00\\ 
	 &	 & 	 &\#Nodes	&1	&2433	&1	&253	&39	& \multicolumn{1}{c}{---}	&0\\ 
	 &	 & 	 &\#Cuts	& \multicolumn{1}{c}{---}	&654	& \multicolumn{1}{c}{---}	&56	&25	&1	&7\\ \hline 
 \texttt{port2}	 &89	 &$10^3$	 &Time	&\textbf{7.4}	&$>$3600	&7.5	&$>$3600	&$>$3600	&$>$3600	&$>$3600\\ 
	 &	 & 	 &Obj	&\textbf{1.773}	&2.119	&\textbf{1.773}	&2.002	&1.833 &2.022	&\textbf{1.773}\\ 
	 &	 & 	 &Gap(\%)	&\textbf{0.00}	&19.13	&\textbf{0.00}	&14.61	&8.52	&10.80	&6.90\\ 
	 &	 & 	 &\#Nodes	&1432	&$>$287,645	&577	&$>$152,730	&$>$216,174	& \multicolumn{1}{c}{---}&$>$36,644\\ 
	 &	 & 	 &\#Cuts	& \multicolumn{1}{c}{---}	&$>$17,731	& \multicolumn{1}{c}{---}	&$>$9784	&$>$15,007	&$>$7190	&$>$3899\\ \cline{3-11} 
 	 &	 &$10^4$	 &Time	&\textbf{311.2}	&$>$3600	&429.0	&$>$3600	&$>$3600	&$>$3600	&$>$3600\\ 
	 &	 & 	 &Obj	&\textbf{1.938}	&2.054	&\textbf{1.938}	&1.981	&2.009	&2.181	&1.943\\ 
	 &	 & 	 &Gap(\%)	&\textbf{0.00}	&13.17	&\textbf{0.00}	&6.90	&9.83	&10.61	&6.53\\ 
	 &	 & 	 &\#Nodes	&5715	&$>$261,628	&2429	&$>$204,649	&$>$22,111	& \multicolumn{1}{c}{---}&$>$18,077\\ 
	 &	 & 	 &\#Cuts	& \multicolumn{1}{c}{---}	&$>$14,278	& \multicolumn{1}{c}{---}	&$>$12,408	&$>$3654	&$>$2695	&$>$2548\\ \cline{3-11} 
 	 &	 &$10^5$	 &Time	&$>$3600	&$>$3600	&$>$3600	&$>$3600	&$>$3600	&$>$3600	&$>$3600\\ 
	 &	 & 	 &Obj	&2.030	&2.140	&2.039	&2.074	&1.988	&2.136	&\textbf{1.950}\\ 
	 &	 & 	 &Gap(\%)	&9.03	&18.61	&9.17	&17.76	&\textbf{8.35}	&9.30	&13.55\\ 
	 &	 & 	 &\#Nodes	&$>$481	&$>$99,640	&$>$1	&$>$40,749	&$>$1756	& \multicolumn{1}{c}{---} &$>$927\\ 
	 &	 & 	 &\#Cuts	& \multicolumn{1}{c}{---}	&$>$6168	& \multicolumn{1}{c}{---}	&$>$5123	&$>$406	&$>$350	&$>$285\\ \hline 
 \texttt{port5}	 &225	 &$10^3$	 &Time	&8.3	&250.0	&\textbf{7.9}	&40.6	&144.8	&157.8	&130.3\\ 
	 &	 & 	 &Obj	&\textbf{2.933}	&\textbf{2.933}	&\textbf{2.933}	&2.650\tnote{$\dagger$}	&\textbf{2.933}	&\textbf{2.933}	&\textbf{2.933}\\ 
	 &	 & 	 &Gap(\%)	&\textbf{0.00}	&\textbf{0.00}	&\textbf{0.00}	&0.00	&\textbf{0.00}	&\textbf{0.00}	&\textbf{0.00}\\ 
	 &	 & 	 &\#Nodes	&368	&45,800	&21	&4674	&7982	& \multicolumn{1}{c}{---}	&7499\\ 
	 &	 & 	 &\#Cuts	& \multicolumn{1}{c}{---}	&4632	& \multicolumn{1}{c}{---}	&608	&775	&446	&489\\ \cline{3-11} 
 	 &	 &$10^4$	 &Time	&246.1	&668.1	&\textbf{113.0}	&211.4	&1387.1	&500.1	&587.5\\ 
	 &	 & 	 &Obj	&\textbf{3.147}	&\textbf{3.147}	&\textbf{3.147}	&\textbf{3.147}	&\textbf{3.147}	&\textbf{3.147}	&\textbf{3.147}\\ 
	 &	 & 	 &Gap(\%)	&\textbf{0.00}	&\textbf{0.00}	&\textbf{0.00}	&\textbf{0.00}	&\textbf{0.00}	&\textbf{0.00}	&\textbf{0.00}\\ 
	 &	 & 	 &\#Nodes	&428	&42,363	&317	&9056	&8269	& \multicolumn{1}{c}{---}	&7347\\ 
	 &	 & 	 &\#Cuts	& \multicolumn{1}{c}{---}	&3773	& \multicolumn{1}{c}{---}	&1204	&701	&517	&727\\ \cline{3-11} 
 	 &	 &$10^5$	 &Time	&$>$3600	&$>$3600	&$>$3600	&$>$3600	&$>$3600	&\textbf{2351.6}	&3300.8\\ 
	 &	 & 	 &Obj	&$\infty$	&3.246	&3.143	&3.276	&3.173	&\textbf{3.138}	&\textbf{3.138}\\ 
	 &	 & 	 &Gap(\%)	&100.00	&3.98	&0.44	&4.29	&2.05	&\textbf{0.00}	&\textbf{0.00}\\ 
	 &	 & 	 &\#Nodes	&$>$0	&$>$20,833	&$>$61	&$>$16,790	&$>$2140	& \multicolumn{1}{c}{---}	&5697\\ 
	 &	 & 	 &\#Cuts	& \multicolumn{1}{c}{---}	&$>$2409	& \multicolumn{1}{c}{---}	&$>$2933	&$>$219	&352	&519\\ 
	 \bottomrule     
	 \end{tabular}
	 \begin{tablenotes}\footnotesize
    \item[$\dagger$] Gurobi returned an incorrect optimal objective value because of numerical instability.
    \end{tablenotes}
     \end{threeparttable}
\end{table}

\subsection{Sensitivity to hyperparameter values}
Next, we examine the sensitivity of the computational performance to the hyperparameters $\gamma$ for the $\ell_2$-regularization term and $k$ for the cardinality constraint. 
Here, we used three large datasets, namely, \texttt{ind49}, \texttt{sbm100}, and \texttt{port5}. 

Table \ref{tbl:results_gamma} presents the numerical results for each dataset with the $\ell_2$-regularization parameter $\gamma \in \{1/\sqrt{N}, 10/\sqrt{N}, 100/\sqrt{N}\}$.
Here, we set $k=10$ for the cardinality constraint and $S=10^5$ as the number of scenarios.
BCP was faster than the other methods except in the case of the \texttt{sbm100} dataset with $\gamma=100/\sqrt{N}$.
Additionally, BCP tended to be faster with smaller $\gamma$.
For the \texttt{port5} dataset, BCP attained an optimal solution in 562.3~s with $\gamma=1/\sqrt{N}$, whereas it reached the time limit with $\gamma=100/\sqrt{N}$, resulting in a solution of poor quality.
In contrast, BCPc found a good solution even when the computation was terminated because of the time limit.

Table \ref{tbl:results_k} gives the numerical results for each dataset with the cardinality parameter $k \in \{5, 10, 15\}$.
Here, we set $\gamma=10/\sqrt{N}$ for the $\ell_2$-regularization term and $S=10^5$ as the number of scenarios.
BCP outperformed the other methods for most of the problem instances and tended to be faster with larger $k$. 
For the \texttt{port5} dataset, BCP solved the problem with $k=15$ in 73.7~s, whereas it failed to finish the computation with $k=5$.
Here, \#Cuts was also much smaller for $k=15$ than for $k=5$, which is part of the reason why BCP performed better with larger $k$. 
The results of BCPc show a similar tendency.

\begin{table}[ht]
\scriptsize \renewcommand{\arraystretch}{0.9}
    \centering
    \caption{Sensitivity results for large problem instances with $(S,k)=(10^5,10)$}
    \label{tbl:results_gamma}
    \begin{tabular}{crrrrrrrrrrr}
    \toprule
    \multirow{2}{*}{Data} 	&\multirow{2}{*}{$N$}	&\multirow{2}{*}{$\gamma$}	 &&\multicolumn{2}{c}{BigM}	 &\multicolumn{2}{c}{Persp}	 &\multirow{2}{*}{CP}	 &\multirow{2}{*}{BCP}	  &\multirow{2}{*}{BCPc}\\  \cmidrule(lr){5-6} \cmidrule(lr){7-8} 
        &       &        &&\multicolumn{1}{c}{Lift}&\multicolumn{1}{c}{Cut} &\multicolumn{1}{c}{Lift}&\multicolumn{1}{c}{Cut}\\ \midrule
\texttt{ind49}	 &49	 &1$/\sqrt{N}$	 &Time	&1138.4	&246.5	&$>$3600	&209.7	&823.3	&\textbf{64.9}	&127.6\\ 
	 &	 & 	 &Obj	&3.823	&3.823	&3.824	&3.823	&3.823	&3.823	&3.823\\ 
	 &	 & 	 &Gap(\%)	&0.00	&0.00	&0.12	&0.00	&0.00	&0.00	&0.00\\ 
	 &	 & 	 &\#Nodes	&483	&4560	&$>$6	&4647	&656	& \multicolumn{1}{c}{---}	&32\\ 
	 &	 & 	 &\#Cuts	& \multicolumn{1}{c}{---}	&696	& \multicolumn{1}{c}{---}	&517	&93	&14	&37\\ \cline{3-11} 
 	 &	 &10$/\sqrt{N}$	 &Time	&709.8	&125.0	&$>$3600	&220.0	&662.2	&\textbf{43.1}	&103.6\\ 
	 &	 & 	 &Obj	&3.379	&3.379	&3.380	&3.379	&3.379	&3.379	&3.379\\ 
	 &	 & 	 &Gap(\%)	&0.00	&0.00	&0.01	&0.00	&0.00	&0.00	&0.00\\ 
	 &	 & 	 &\#Nodes	&31	&4101	&$>$1	&3489	&$>$251	& \multicolumn{1}{c}{---}	&0\\ 
	 &	 & 	 &\#Cuts	& \multicolumn{1}{c}{---}	&380	& \multicolumn{1}{c}{---}	&$>$661	&68	&1	&12\\ \cline{3-11} 
 	 &	 &100$/\sqrt{N}$	 &Time	&528.2	&415.4	&3016.6	&556.1	&2264.5	&\textbf{89.0}	&129.2\\ 
	 &	 & 	 &Obj	&3.322	&3.322	&3.322	&3.322	&3.322	&3.322	&3.322\\ 
	 &	 & 	 &Gap(\%)	&0.00	&0.00	&0.00	&0.00	&0.00	&0.00	&0.00\\ 
	 &	 & 	 &\#Nodes	&1	&5273	&1	&4464	&2391	& \multicolumn{1}{c}{---}	&0\\ 
	 &	 & 	 &\#Cuts	& \multicolumn{1}{c}{---}	&1172	& \multicolumn{1}{c}{---}	&1544	&290	&1	&7\\ \hline 
 \texttt{sbm100}	 &100	 &1$/\sqrt{N}$	 &Time	&$>$3600	&114.8	&$>$3600	&60.8	&495.8	&\textbf{22.3}	&34.7\\ 
	 &	 & 	 &Obj	&5.446	&5.075	&5.075	&5.075	&5.075	&5.075	&5.075\\ 
	 &	 & 	 &Gap(\%)	&7.19	&0.00	&0.01	&0.00	&0.00	&0.00	&0.00\\ 
	 &	 & 	 &\#Nodes	&$>$1	&2741	&$>$5	&1791	&21	& \multicolumn{1}{c}{---}	&1\\ 
	 &	 & 	 &\#Cuts	& \multicolumn{1}{c}{---}	&175	& \multicolumn{1}{c}{---}	&80	&21	&3	&12\\ \cline{3-11} 
 	 &	 &10$/\sqrt{N}$	 &Time	&1107.1	&65.1	&$>$3600	&84.6	&482.7	&\textbf{27.6}	&36.6\\ 
	 &	 & 	 &Obj	&4.364	&4.364	&4.364	&4.364	&4.364	&4.364	&4.364\\ 
	 &	 & 	 &Gap(\%)	&0.00	&0.00	&0.00	&0.00	&0.00	&0.00	&0.00\\ 
	 &	 & 	 &\#Nodes	&1	&412	&$>$1	&4676	&10	& \multicolumn{1}{c}{---}	&0\\ 
	 &	 & 	 &\#Cuts	& \multicolumn{1}{c}{---}	&120	& \multicolumn{1}{c}{---}	&152	&17	&1	&7\\ \cline{3-11} 
 	 &	 &100$/\sqrt{N}$	 &Time	&1078.9	&155.1	&1977.1	&\textbf{22.8}	&557.6	&46.8	&74.6\\ 
	 &	 & 	 &Obj	&4.225	&4.225	&4.225	&4.225	&4.225	&4.225	&4.225\\ 
	 &	 & 	 &Gap(\%)	&0.00	&0.00	&0.00	&0.00	&0.00	&0.00	&0.00\\ 
	 &	 & 	 &\#Nodes	&1	&634	&1	&1	&19	& \multicolumn{1}{c}{---}	&0\\ 
	 &	 & 	 &\#Cuts	& \multicolumn{1}{c}{---}	&252	& \multicolumn{1}{c}{---}	&40	&27	&1	&9\\ \hline 
 \texttt{port5}	 &225	 &1$/\sqrt{N}$	 &Time	&$>$3600	&$>$3600	&$>$3600	&3392.3	&$>$3600	&\textbf{562.3}	&873.9\\ 
	 &	 & 	 &Obj	&$\infty$	&3.945	&3.899	&\textbf{3.841}	&3.877	&\textbf{3.841}	&\textbf{3.841}\\ 
	 &	 & 	 &Gap(\%)	&100.00	&9.96	&4.97	&\textbf{0.00}	&2.69	&\textbf{0.00}	&\textbf{0.00}\\ 
	 &	 & 	 &\#Nodes	&$>$0	&$>$184,221	&$>$1	&20,182	&$>$2311	& \multicolumn{1}{c}{---}	&3124\\ 
	 &	 & 	 &\#Cuts	& \multicolumn{1}{c}{---}	&$>$2610	& \multicolumn{1}{c}{---}	&2125	&$>$203	&167	&275\\ \cline{3-11} 
 	 &	 &10$/\sqrt{N}$	 &Time	&$>$3600	&$>$3600	&$>$3600	&$>$3600	&$>$3600	&\textbf{2351.6}	&3300.8\\ 
	 &	 & 	 &Obj	&$\infty$	&3.246	&3.143	&3.276	&3.173	&\textbf{3.138}	&\textbf{3.138}\\ 
	 &	 & 	 &Gap(\%)	&100.00	&3.98	&0.44	&4.29	&2.05	&\textbf{0.00}	&\textbf{0.00}\\ 
	 &	 & 	 &\#Nodes	&$>$0	&$>$20,833	&$>$61	&$>$16,790	&$>$2140	& \multicolumn{1}{c}{---}	&5697\\ 
	 &	 & 	 &\#Cuts	& \multicolumn{1}{c}{---}	&$>$2409	& \multicolumn{1}{c}{---}	&$>$2933	&$>$219	&352	&519\\ \cline{3-11} 
 	 &	 &100$/\sqrt{N}$	 &Time	&$>$3600	&$>$3600	&$>$3600	&$>$3600	&$>$3600	&$>$3600	&$>$3600\\ 
	 &	 & 	 &Obj	&$\infty$	&$\infty$&\textbf{3.058}	&3.109	&3.868	&3.454	&3.061\\ 
	 &	 & 	 &Gap(\%)	&100.00	&100.00	&\textbf{0.30}	&1.79	&21.44	&9.12	&15.18\\ 
	 &	 & 	 &\#Nodes	&$>$0	&$>$11,144	&$>$31	&$>$14,020	&$>$826	& \multicolumn{1}{c}{---} &$>$547\\ 
	 &	 & 	 &\#Cuts	& \multicolumn{1}{c}{---}	&$>$2738	& \multicolumn{1}{c}{---}	&$>$2737	&$>$241	&$>$283	&$>$223\\ 
 \bottomrule     
	 \end{tabular}
\end{table}

\begin{table}[ht]
\scriptsize \renewcommand{\arraystretch}{0.9}
    \centering
    \caption{Sensitivity results for large problem instances with $(S,\gamma)=(10^5,10/\sqrt{N})$}
    \label{tbl:results_k}
    \begin{tabular}{crrrrrrrrrrr}
    \toprule
    \multirow{2}{*}{Data} 	&\multirow{2}{*}{$N$}	&\multirow{2}{*}{$k$}	 &&\multicolumn{2}{c}{BigM}	 &\multicolumn{2}{c}{Persp}	 &\multirow{2}{*}{CP}	 &\multirow{2}{*}{BCP}	  &\multirow{2}{*}{BCPc}\\ \cmidrule(lr){5-6} \cmidrule(lr){7-8} 
        &       &        &&\multicolumn{1}{c}{Lift}&\multicolumn{1}{c}{Cut} &\multicolumn{1}{c}{Lift}&\multicolumn{1}{c}{Cut}\\ \midrule 
\texttt{ind49}	 &49	 &5	 &Time	&733.7	&\textbf{160.0}	&$>$3600	&434.3	&1455.9	&638.5	&907.5\\ 
	 &	 & 	 &Obj	&3.443	&3.443	&3.443	&3.443	&3.443	&3.443	&3.443\\ 
	 &	 & 	 &Gap(\%)	&0.00	&0.00	&0.00	&0.00	&0.00	&0.00	&0.00\\ 
	 &	 & 	 &\#Nodes	&83	&2475	&$>$61	&5172	&2614	& \multicolumn{1}{c}{---}	&970\\ 
	 &	 & 	 &\#Cuts	& \multicolumn{1}{c}{---}	&428	& \multicolumn{1}{c}{---}	&1197	&226	&132	&172\\ \cline{3-11} 
 	 &	 &10	 &Time	&709.8	&125.0	&$>$3600	&220.0	&662.2	&\textbf{43.1}	&103.6\\ 
	 &	 & 	 &Obj	&3.379	&3.379	&3.380	&3.379	&3.379	&3.379	&3.379\\ 
	 &	 & 	 &Gap(\%)	&0.00	&0.00	&0.00	&0.00	&0.00	&0.00	&0.00\\ 
	 &	 & 	 &\#Nodes	&31	&4101	&$>$1	&3489	&250	& \multicolumn{1}{c}{---}	&0\\ 
	 &	 & 	 &\#Cuts	& \multicolumn{1}{c}{---}	&380	& \multicolumn{1}{c}{---}	&661	&68	&1	&12\\ \cline{3-11} 
 	 &	 &15	 &Time	&555.2	&128.2	&$>$3600	&76.7	&448.6	&\textbf{48.7}	&82.3\\ 
	 &	 & 	 &Obj	&3.366	&3.366	&3.366	&3.366	&3.366	&3.366	&3.366\\ 
	 &	 & 	 &Gap(\%)	&0.00	&0.00	&0.00	&0.00	&0.00	&0.00	&0.00\\ 
	 &	 & 	 &\#Nodes	&1	&3392	&$>$1	&1043	&67	& \multicolumn{1}{c}{---}	&0\\ 
	 &	 & 	 &\#Cuts	& \multicolumn{1}{c}{---}	&363	& \multicolumn{1}{c}{---}	&211	&39	&1	&7\\ \hline 
 \texttt{sbm100}	 &100	 &5	 &Time	&1260.7	&179.8	&2941.5	&147.8	&690.8	&\textbf{38.3}	&48.7\\ 
	 &	 & 	 &Obj	&4.367	&4.367	&4.367	&4.367	&4.367	&4.367	&4.367\\ 
	 &	 & 	 &Gap(\%)	&0.00	&0.00	&0.00	&0.00	&0.00	&0.00	&0.00\\ 
	 &	 & 	 &\#Nodes	&92	&1092	&3	&1866	&116	& \multicolumn{1}{c}{---}	&7\\ 
	 &	 & 	 &\#Cuts	& \multicolumn{1}{c}{---}	&296	& \multicolumn{1}{c}{---}	&239	&53	&5	&12\\ \cline{3-11} 
 	 &	 &10	 &Time	&1107.1	&65.1	&$>$3600	&84.6	&482.7	&\textbf{27.6}	&36.6\\ 
	 &	 & 	 &Obj	&4.364	&4.364	&4.364	&4.364	&4.364	&4.364	&4.364\\ 
	 &	 & 	 &Gap(\%)	&0.00	&0.00	&0.00	&0.00	&0.00	&0.00	&0.00\\ 
	 &	 & 	 &\#Nodes	&1	&412	&$>$1	&4676	&10	& \multicolumn{1}{c}{---}	&0\\ 
	 &	 & 	 &\#Cuts	& \multicolumn{1}{c}{---}	&120	& \multicolumn{1}{c}{---}	&152	&17	&1	&7\\ \cline{3-11} 
 	 &	 &15	 &Time	&1051.6	&43.2	&3438.1	&44.4	&437.5	&\textbf{29.5}	&48.0\\ 
	 &	 & 	 &Obj	&4.364	&4.364	&4.364	&4.364	&4.364	&4.364	&4.364\\ 
	 &	 & 	 &Gap(\%)	&0.00	&0.00	&0.00	&0.00	&0.00	&0.00	&0.00\\ 
	 &	 & 	 &\#Nodes	&1	&90	&1	&5738	&9	& \multicolumn{1}{c}{---}	&0\\ 
	 &	 & 	 &\#Cuts	& \multicolumn{1}{c}{---}	&70	& \multicolumn{1}{c}{---}	&64	&12	&1	&7\\ \hline 
 \texttt{port5}	 &225	 &5	 &Time	&$>$3600	&$>$3600	&$>$3600	&\textbf{2263.9}	&$>$3600	&$>$3600	&$>$3600\\ 
	 &	 & 	 &Obj	&$\infty$ &3.491	&3.755	&\textbf{3.295}	&3.397	&\textbf{3.295}	&\textbf{3.295}\\ 
	 &	 & 	 &Gap(\%)	&100.00	&8.72	&14.25	&\textbf{0.00}	&8.52	&0.72	&3.37\\ 
	 &	 & 	 &\#Nodes	&$>$0	&$>$41,433	&$>$42	&19,851	&$>$1390	& \multicolumn{1}{c}{---}	&$>$10,767\\ 
	 &	 & 	 &\#Cuts	& \multicolumn{1}{c}{---}	&$>$2644	& \multicolumn{1}{c}{---}	&1649	&$>$249	&$>$667	&$>$861\\ \cline{3-11} 
 	 &	 &10	 &Time	&$>$3600	&$>$3600	&$>$3600	&$>$3600	&$>$3600	&\textbf{2351.6}	&3300.8\\ 
	 &	 & 	 &Obj	&$\infty$	&3.246	&3.143	&3.276	&3.173	&\textbf{3.138}	&\textbf{3.138}\\ 
	 &	 & 	 &Gap(\%)	&100.00	&3.98	&0.44	&4.29	&2.05	&\textbf{0.00}	&\textbf{0.00}\\ 
	 &	 & 	 &\#Nodes	&$>$0	&$>$20,833	&$>$61	&$>$16,790	&$>$2140	& \multicolumn{1}{c}{---}	&5697\\ 
	 &	 & 	 &\#Cuts	& \multicolumn{1}{c}{---}	&$>$2409	& \multicolumn{1}{c}{---}	&$>$2933	&$>$219	&352	&519\\ \cline{3-11} 
 	 &	 &15	 &Time	&$>$3600	&1154.4	&$>$3600	&1116.1	&$>$3600	&\textbf{73.7}	&121.4\\ 
	 &	 & 	 &Obj	&$\infty$	&\textbf{3.109}	&\textbf{3.109}	&\textbf{3.109}	&3.170	&\textbf{3.109}	&\textbf{3.109}\\ 
	 &	 & 	 &Gap(\%)	&100.00	&\textbf{0.00}	&0.01	&\textbf{0.00}	&1.97	&\textbf{0.00}	&\textbf{0.00}\\ 
	 &	 & 	 &\#Nodes	&$>$0	&8454	&$>$1	&5696	&$>$2516	& \multicolumn{1}{c}{---}	&0\\ 
	 &	 & 	 &\#Cuts	& \multicolumn{1}{c}{---}	&757	& \multicolumn{1}{c}{---}	&838	&$>$218	&1	&11\\
 \bottomrule     
	 \end{tabular}
\end{table}

\section{Conclusion}\label{sec:conclusion}
This paper deals with the scenario-based mean-CVaR portfolio optimization problem with the cardinality constraint.
It is very hard to exactly solve the problem when it involves a large number of investable assets. 
In addition, accurate approximation of scenario-based CVaR requires sufficiently many scenarios, which decreases computational efficiency. 
We reformulated the problem as a bilevel optimization problem and developed a cutting-plane algorithm specialized for solving the upper-level problem. 
Moreover, we integrated another cutting-plane algorithm to efficiently solve the lower-level problem with a large number of scenarios.  
We also proved that our algorithms give a solution with guaranteed global optimality in a finite number of iterations. 

The computational results indicate that our cutting-plane algorithms were very effective especially when the number of scenarios was large. 
Remarkably, our bilevel cutting-plane algorithm attained an optimal solution within 3600~s to a problem involving 225 assets and 100,000 scenarios.
Furthermore, our algorithms performed well for most of the hyperparameter values.  

A future direction of study is to extend our algorithm to mixed-integer conic optimization problems related to robust portfolio optimization models~\cite{fabozzi2010robust}. 
For such problems, some previous studies have pointed out the effectiveness of cutting-plane algorithms~\cite{bertsimas2019unified,Coey2020,Kobayashi2019}. 
Another direction of future research is to use statistical techniques~\cite{bertsimas2020scalable,Tamura2017,tamura2019mixed} to increase the numerical stability of computed portfolios.

% BibTeX users please use one of
%\bibliographystyle{spbasic}      % basic style, author-year citations
\bibliographystyle{spmpsci}      % mathematics and physical sciences
%\bibliographystyle{spphys}       % APS-like style for physics
%\bibliography{}   % name your BibTeX data base

%% else use the following coding to input the bibitems directly in the
%% TeX file.

%\bibliographystyle{elsarticle-num}  %参考文献出力スタイル
\bibliographystyle{siam}
\bibliography{bibliography} %hoge.bibから拡張子を外した名前
%% \bibitem{label}
%% Text of bibliographic item

\appendix
\def\thesection{Appendix \Alph{section}}

\section{Proof of Theorem \ref{thm:f_dual_lifting}}\label{sec:proof_lift}
%In this section, we prove Theorem \ref{thm:f_dual_lifting}.

%\begin{proof}[Proof of Theorem \ref{thm:f_dual_lifting}]
%For a fixed $\bm z\in \mathcal{Z}_N^k$ such that Problem (\ref{prob:sub}) is feasible, we derive the Lagrange dual problem of Problem \eqref{prob:sub}.
%By removing the auxiliary variable $v$, 
Problem \eqref{prob:sub2} is formulated as follows:
 \begin{subequations}\label{prob:sub_reform} 
\begin{alignat}{3}
f(\bm z)=&\minimize_{a,\bm q,v,\bm x} &&\quad \frac{1}{2\gamma }\bm x^\top \bm x + a +  v\\
    &\subjectto &&\quad  v\geq \frac{1}{1-\beta}\sum_{s \in \mathcal{S}}p_s q_s, \label{sub_const1}\\
    &&&\quad q_s \geq -(\bm r^{(s)})^\top \bm Z\bm x - a &\quad (\forall s\in \mathcal{S}),\label{sub_const2}\\
    &&&\quad \bm A \bm Z \bm x \leq \bm b,\label{sub_const3}\\
    &&&\quad \bm 1^\top \bm Z \bm x = 1,\label{sub_const4}\\
    &&&\quad \bm Z \bm x \geq \bm 0,\label{sub_const5}\\
    &&&\quad  \bm q\geq \bm 0. \label{sub_const6}
\end{alignat}
\end{subequations}
The Lagrange function of Problem~\eqref{prob:sub_reform} is expressed as
\begin{equation*}
    \begin{split}
        \mathcal{L}(a,\bm q,v,\bm x, \eta,\bm \alpha, \bm \zeta, \lambda,\bm \pi,\bm \theta) 
        & := \frac{1}{2\gamma }\bm x^\top \bm x + a +  v - \eta\left(v-\frac{1}{1-\beta}\sum_{s \in \mathcal{S}}p_sq_s\right)\\
        &\qquad -\sum_{ s\in \mathcal{S}}\alpha_{s}\left(q_s +(\bm r^{(s)})^\top\bm Z \bm x +a\right)\\
        &\qquad  -\bm \zeta^\top \left(\bm b - \bm A\bm Z\bm x\right) - \lambda \left( \bm 1^\top \bm Z\bm x-1\right)-\bm \pi^\top \bm  Z\bm x- \bm \theta^\top \bm q,
    \end{split}
\end{equation*}
where $\eta \geq 0,~\bm \alpha := (\alpha_{s})_{s\in \mathcal{S}} \geq \bm 0,~\bm \zeta \geq \bm 0,~\lambda\in \mathbb{R},~\bm \pi \geq \bm 0~\mathrm{and}~\bm \theta \geq \bm 0$ are Lagrange multipliers. 
%The Lagrange relaxation of Problem~\eqref{prob:sub_reform} is then posed as
% \begin{equation}\label{prob:sub_relax}
%    \min_{\substack{a\in \mathbb{R},~\bm q \in \mathbb{R}^S,\\ v\in \mathbb{R},~\bm x\in \mathbb{R}^N}}~\max_{\substack{\eta \geq 0,~\bm \alpha\geq \bm 0,~\bm \zeta\geq \bm 0,\\ \lambda\in \mathbb{R},~\bm \pi \geq \bm 0,~\bm \theta \geq \bm 0}}~\mathcal{L}(a,\bm q,v,\bm x, \eta,\bm \alpha, \bm \zeta, \lambda,\bm \pi,\bm \theta). 
% \end{equation}
Then, the Lagrange dual problem of Problem \eqref{prob:sub_reform} is posed as:
 \begin{equation}\label{prob:sub_relax_dual}
    \max_{\substack{\eta \geq 0,~\bm \alpha\geq \bm 0,~\bm \zeta\geq \bm 0,\\ \lambda\in \mathbb{R},~\bm \pi \geq \bm 0,~\bm \theta \geq \bm 0}}~\min_{\substack{a\in \mathbb{R},~\bm q \in \mathbb{R}^S,\\ v\in \mathbb{R},~\bm x\in \mathbb{R}^N}}~\mathcal{L}(a,\bm q,v,\bm x, \eta,\bm \alpha, \bm \zeta, \lambda,\bm \pi,\bm \theta).
\end{equation}

Recall that Problem~\eqref{prob:sub_reform} is feasible.
Also, the objective function is proper convex, and all the constraints are linear in Problem~\eqref{prob:sub_reform}.
Then, the strong duality holds; see, for example, Section~5.2.3 in Boyd and Vandenberghe~\cite{Boyd2009}.
As a result, $f(\bm z)$ is equal to the optimal objective value of Problem \eqref{prob:sub_relax_dual}.
Now, let us focus on the inner minimization problem: 
\begin{equation}\label{prob:relaxed_prob}
    \min_{\substack{a\in \mathbb{R},~\bm q \in \mathbb{R}^S,\\ v\in \mathbb{R},~\bm x\in \mathbb{R}^N}}~\mathcal{L}(a,\bm q,v,\bm x, \eta,\bm \alpha, \bm \zeta, \lambda,\bm \pi,\bm \theta).
\end{equation}
Note that Problem \eqref{prob:relaxed_prob} is an unconstrained convex quadratic optimization problem and its objective function is linear in $(a, \bm q, v)$.
Because Problem (\ref{prob:relaxed_prob}) must be bounded, the Lagrange multipliers are required to satisfy the following conditions:
\begin{align}
    %&\nabla_{\bm x}\mathcal{L} = \frac{1}{\gamma} \bm x - \bm Z \left(\sum_{\mathcal J\subseteq \{1,2,\ldots, S\}}\alpha_{\mathcal{J}}\left(\sum_{s\in \mathcal{J}} \bm r^{(s)}\right) + \bm A^\top (\bm\zeta_{\bm \ell}-\bm \zeta_{\bm u})  + \lambda \bm e + \bm \pi \right) = \bm 0,\\
    \nabla_{a}\mathcal{L} &= 1 - \sum_{s\in \mathcal{S}}\alpha_s = 0, \label{dual:codition1}\\
    \nabla_{\bm q}\mathcal{L} &= \frac{\eta}{1-\beta}\bm p - \bm \alpha-\bm \theta = 0,\label{dual:codition2}\\
    \nabla_{v}\mathcal{L} &=1 -\eta  = 0.\label{dual:codition3}
    \end{align}
Also, the following optimality condition should be satisfied:
\begin{align}
        \nabla_{\bm x}\mathcal{L} &= \frac{1}{\gamma} \bm x - \bm Z \left(\sum_{s\in \mathcal{S}}\alpha_{s}\bm r^{(s)} - \bm A^{\top} \bm \zeta + \lambda \bm 1 + \bm \pi \right) = \bm 0. \label{dual:codition4}
\end{align}

According to Eqs. (\ref{dual:codition1}), (\ref{dual:codition2}), (\ref{dual:codition3}), and (\ref{dual:codition4}), the optimal objective value of Problem (\ref{prob:relaxed_prob}) is calculated as
\begin{equation*}%\label{eq:lowerbound}
 -\frac{\gamma}{2} \bm \omega^\top \bm Z^2 \bm \omega - \bm b^\top \bm \zeta + \lambda, 
\end{equation*}
where $\bm \omega\in \mathbb{R}^N$ is a vector of auxiliary decision variables satisfying
\begin{equation*}
    \bm \omega = \sum_{s\in \mathcal{S}}\alpha_{s}\bm r^{(s)} - \bm A^\top \bm \zeta + \lambda \bm 1 + \bm \pi.
\end{equation*}

Because $\bm z \in \mathcal{Z}_N^k$, it holds that $\bm \omega^\top \bm Z^2\bm \omega = \bm z^\top (\bm \omega\circ \bm \omega)$.
Therefore, the Lagrange dual problem~\eqref{prob:sub_relax_dual} is formulated as follows:
\begin{alignat*}{3}
    f(\bm z)=&\maximize_{\bm \alpha, \bm \zeta, \lambda, \bm \omega} &&\quad -\frac{\gamma}{2} \bm z^\top (\bm \omega\circ \bm \omega) - \bm b^\top \bm \zeta + \lambda\\
    &\subjectto && \quad \bm \omega \geq  \sum_{s\in \mathcal{S}}\alpha_{s}\bm r^{(s)} - \bm A^\top \bm \zeta + \lambda \bm 1, \\
    &&&\quad \sum_{s\in  \mathcal{S}}\alpha_s = 1,\\
    &&&\quad \alpha_s \leq\frac{p_s}{1-\beta} \quad(\forall s\in \mathcal{S}),\\
    &&&\quad \bm \alpha \geq \bm 0,~\bm \zeta\geq \bm 0,
\end{alignat*}
where we substitute $\eta=1$, and eliminate the nonnegative variables $\bm \pi$ and $\bm \theta$.
\qed

\section{Proof of Theorem \ref{thm:f_dual_cutting}}\label{sec:proof_cutting}
Problem \eqref{prob:sub_primal_part} is formulated as follows:
 \begin{subequations}\label{prob:sub_primal_part_reform} 
\begin{alignat}{3}
f_{\mathcal{K}}(\bm z)=&\minimize_{a, v,\bm x} &&\quad \frac{1}{2\gamma }\bm x^\top \bm x + a +  v\\
    &\subjectto &&\quad  v \geq \frac{1}{1-\beta}\sum_{s \in \mathcal{J}}p_s( -(\bm r^{(s)})^\top \bm Z\bm x - a) &&\quad (\forall \mathcal{J} \in \mathcal{K}),\\
    &&&\quad v\geq 0,\\
    &&&\quad \bm A \bm Z \bm x \leq \bm b,\\
    &&&\quad \bm 1^\top \bm Z \bm x = 1,\\
    &&&\quad \bm Z \bm x \geq \bm 0.
\end{alignat}
\end{subequations}
The Lagrange function of Problem~\eqref{prob:sub_primal_part_reform} is expressed as
\begin{equation*}
    \begin{split}
        \mathcal{L}(a, v, \bm x, \bm\alpha,\xi,\bm \zeta,\lambda,\bm \pi)&:= \frac{1}{2\gamma }\bm x^\top \bm x + a +  v-\sum_{ \mathcal{J}\in \mathcal{K}}\alpha_{\mathcal{J}}\left(v + \frac{1}{1-\beta}\sum_{s\in \mathcal{J}}p_s\left((\bm r^{(s)})^\top\bm Z \bm x +a\right)\right)\\
        &\qquad-\xi v-\bm \zeta^\top \left(\bm b - \bm A\bm Z\bm x\right) - \lambda \left( \bm 1^\top \bm Z\bm x-1\right)-\bm \pi^\top \bm Z\bm x,
    \end{split}
\end{equation*}
where $\bm \alpha := (\alpha_{\mathcal{J}})_{\mathcal{J}\in \mathcal{K}} \geq \bm 0,~\xi\ge 0,~ \bm \zeta\geq \bm 0,~\lambda\in \mathbb{R}~\mathrm{and}~\bm \pi \geq \bm 0$ are Lagrange multipliers.
Recall that Problem~\eqref{prob:sub_primal_part_reform} is feasible.
Then, the strong duality holds as well as the proof of Theorem \ref{thm:f_dual_lifting}, and $f_{\mathcal{K}}(\bm z)$ is equal to the optimal objective value of the following Lagrange dual problem:
\begin{equation}\label{prob:sub_relax_dual_cutting}
    \max_{\substack{\bm \alpha\geq \bm 0,~\xi\geq 0,~\bm \zeta\geq \bm 0,\\ \lambda\in \mathbb{R},~\bm \pi \geq \bm 0}}~\min_{a\in \mathbb{R},~v\in \mathbb{R},~\bm x\in \mathbb{R}^N} \quad \mathcal{L}(a, v, \bm x, \bm\alpha,    \xi,\bm \zeta,\lambda,\bm \pi).
\end{equation}

Now, let us consider the inner minimization problem:
\begin{equation}\label{prob:relaxed_prob_cutting}
    \min_{a\in \mathbb{R},~v\in \mathbb{R},~\bm x\in \mathbb{R}^N} \quad \mathcal{L}(a, v, \bm x, \bm\alpha,    \xi,\bm \zeta,\lambda,\bm \pi).
\end{equation}
Similarly to the proof of Theorem \ref{thm:f_dual_lifting}, the following conditions must be satisfied:
\begin{align}
    %&\nabla_{\bm x}\mathcal{L} = \frac{1}{\gamma} \bm x - \bm Z \left(\sum_{\mathcal J\subseteq \{1,2,\ldots, S\}}\alpha_{\mathcal{J}}\left(\sum_{s\in \mathcal{J}} \bm r^{(s)}\right) + \bm A^\top (\bm\zeta_{\bm \ell}-\bm \zeta_{\bm u})  + \lambda \bm e + \bm \pi \right) = \bm 0,\\
    & \nabla_{a}\mathcal{L} = 1 - \frac{1}{1-\beta}\sum_{\mathcal{J}\in \mathcal{K}}\alpha_{\mathcal{J}} \sum_{s\in \mathcal J} p_s = 0, \label{dual:codition1_cutting}\\
    & \nabla_{v}\mathcal{L} =1 -\sum_{\mathcal{J}\in \mathcal{K}}\alpha_{\mathcal{J}}-\xi  = 0, \label{dual:codition2_cutting}\\
    & \nabla_{\bm x}\mathcal{L}=  \frac{1}{\gamma} \bm x - \bm Z \left(\frac{1}{1-\beta}\sum_{\mathcal J\in \mathcal{K}}\alpha_{\mathcal{J}} \sum_{s\in \mathcal{J}} p_s \bm r^{(s)} - \bm A^\top \bm \zeta  + \lambda \bm 1 + \bm \pi \right) = \bm 0. \label{dual:codition3_cutting}
\end{align}
According to Eqs. \eqref{dual:codition1_cutting}, \eqref{dual:codition2_cutting} and \eqref{dual:codition3_cutting}, the optimal objective value of Problem \eqref{prob:relaxed_prob_cutting} is calculated as
\begin{equation*}
 -\frac{\gamma}{2} \bm \omega^\top \bm Z^2 \bm \omega - \bm b^\top \bm \zeta + \lambda, 
\end{equation*}
where $\bm \omega\in \mathbb{R}^N$ is a vector of auxiliary decision variables satisfying
\begin{equation*}
    \bm \omega  = \frac{1}{1-\beta}\sum_{\mathcal J\in \mathcal{K}}\alpha_{\mathcal{J}} \sum_{s\in \mathcal{J}} p_s \bm r^{(s)} - \bm A^\top \bm\zeta  + \lambda \bm 1 + \bm \pi.
\end{equation*}

Because $\bm z \in \mathcal{Z}_N^k$, it holds that $\bm \omega^\top \bm Z^2\bm \omega = \bm z^\top (\bm \omega\circ \bm \omega)$. 
Therefore, the Lagrange dual problem \eqref{prob:sub_relax_dual_cutting} is formulated as follows:
\begin{alignat*}{3}
    f_{\mathcal{K}}(\bm z)=&\maximize_{\bm \alpha, \bm \zeta, \lambda, \bm \omega} &&\quad -\frac{\gamma}{2} \bm z^\top (\bm \omega\circ \bm \omega) - \bm b^\top \bm \zeta + \lambda\\
    &\subjectto && \quad \bm \omega \geq \frac{1}{1-\beta}\sum_{\mathcal J \in\mathcal{K}} \alpha_{\mathcal{J}} \sum_{s\in \mathcal J}p_s \bm r^{(s)} -\bm A^\top \bm \zeta +\lambda \bm 1, \label{eq:comp_omega} \\
    &&&\quad \sum_{\mathcal{J}\in \mathcal{K}}\alpha_{\mathcal{J}} \leq 1,\\
    &&&\quad \sum_{\mathcal{J}\in \mathcal{K}}\alpha_{\mathcal{J}} \sum_{s\in \mathcal{J}}p_s = 1-\beta,\\
    &&&\quad \bm \alpha \geq \bm 0,~\bm \zeta\geq \bm 0,
\end{alignat*}
where we eliminate the nonnegative variables $\xi$ and $\bm \pi$.% from Problem \eqref{prob:inner_dual_tmp_cutting}, we obtain the desired formulation~\eqref{prob:sub_dual_part}.
\qed 

\end{document}